  \newenvironment{proof}{\vspace{1ex}\noindent{\bf Proof.}}{\hspace*{\fill}$\blacksquare$\vspace{1ex}}
  \newenvironment{proofof}[1]{\vspace{1ex}\noindent{\bf Proof of #1.}}{\hspace*{\fill}$\blacksquare$\vspace{1ex}}
  \newtheorem{theorem}{Theorem} 
  \newtheorem{lemma} [theorem] {Lemma}
  \newtheorem{corollary} [theorem] {Corollary}
  \newtheorem{conjecture} [theorem] {Conjecture}
\newcommand{\Acal}[0]{\ensuremath{{\mathcal A}}}
\newcommand{\Kcal}[0]{\ensuremath{{\mathcal K}}}
\newcommand{\Ncal}[0]{\ensuremath{{\mathcal N}}}
\newcommand{\Rcal}[0]{\ensuremath{{\mathcal R}}}
\newcommand{\eR}[0]{\ensuremath{ \mathbb R}}
\newcommand{\eN}[0]{\ensuremath{ \mathbb N}}
\newcommand{\norm}[1]{\ensuremath{\|#1\|}}
\newcommand{\Pee}[0]{\ensuremath{{\mathbb P}}}
\newcommand{\Haa}[0]{\ensuremath{{\mathbb H}}}
 \newcommand{\eps}{\varepsilon}
\DeclareMathOperator{\conv}{conv}
\DeclareMathOperator{\vol}{vol}
\DeclareMathOperator{\diam}{diam}
\DeclareMathOperator{\inr}{inr}
\DeclareMathOperator{\supp}{supp}
\definecolor{orange}{RGB}{255,127,0}
\definecolor{pink}{RGB}{255,150,150}
\begin{document}

\title{On the contractibility of random Vietoris--Rips complexes}

\author{Tobias M\"uller\thanks{Bernoulli Institute, Groningen University, The Netherlands. E-mail: {\tt tobias.muller@rug.nl}. 
Partially supported by NWO grants 639.032.529 and 612.001.409.} \and
Mat\v ej Stehl\'ik\thanks{Université de Paris, CNRS, IRIF, F-75006, Paris, France. E-mail: {\tt matej@irif.fr}.
This work was commenced while the author was at Laboratoire G-SCOP, Univ.\ Grenoble Alpes, France.
Partially supported by ANR project GATO (ANR-16-CE40-0009-01) and by LabEx PERSYVAL-Lab (ANR-11-LABX-0025).}}

\date{}

\maketitle

\begin{abstract}
We show that the Vietoris--Rips complex $\Rcal(n,r)$ built over $n$ points sampled at random from a uniformly positive probability 
measure on a convex body $K\subseteq \eR^d$ is a.a.s.~contractible when $r \geq c \left(\frac{\ln n}{n}\right)^{1/d}$
for a certain constant that depends on $K$ and the probability measure used. This answers a question of Kahle~\cite{Kahle11}. 
We also extend the proof to show that if $K$ is a compact, smooth $d$-manifold with boundary -- but not necessarily convex -- then 
$\Rcal(n,r)$ is a.a.s.~homotopy equivalent to $K$ when $c_1 \left(\frac{\ln n}{n}\right)^{1/d} \leq r \leq c_2$
for constants $c_1=c_1(K), c_2=c_2(K)$.
Our proofs expose a connection with the game of cops and robbers.
\end{abstract}

\section{Introduction}

At least in part because of their role as null models in topological data analysis, simplicial complexes built on random points 
in $d$-dimensional Euclidean space have attracted a lot of attention over 
the past decade or so. We refer the reader to the recent survey article~\cite{BobrowskiKahlesurvey} and the references therein 
for more background information and an overview of the results on random geometric complexes.

In this note, $K \subseteq \eR^d$ will either be a convex body (a convex, compact set with nonempty interior) 
or a compact, smooth $d$-manifold  with boundary; and $\nu$ will 
be a probability measure on $\eR^d$ with probability density $f$, whose support is $K$ and which is uniformly positive on $K$. 
(I.e. $\inf_{x\in K} f(x) > 0.$)
We will consider the random Vietoris--Rips complex $\Rcal(n,r)$ constructed by sampling $X_1,\dots, X_n$ i.i.d.~according to $\nu$ and
declaring a subset of these points a simplex if and only if all its pairwise distances are at most $r$.

If $(E_n)_n$ is a sequence of events then we say that $E_n$ holds {\em asymptotically almost surely} (a.a.s.) if
$\Pee(E_n) \to 1$. Our first result is as follows.

\begin{theorem}\label{thm:contract}
If $K\subseteq\eR^d$ is a convex body then there is a constant $c=c(K,\nu)$ such that $\Rcal(n,r_n)$ is a.a.s.~contractible
whenever $r_n \geq c \left(\frac{\ln n}{n}\right)^{1/d}$. 
\end{theorem}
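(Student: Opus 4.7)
The plan is to show that, a.a.s., the 1-skeleton $G_n$ of $\Rcal(n,r_n)$ is a \emph{dismantlable} graph, i.e.~its vertices can be ordered $v_1,\dots,v_n$ so that for every $i<n$ some $v_j$ with $j>i$ satisfies $N[v_i]\cap\{v_i,\dots,v_n\}\subseteq N[v_j]$. Equivalently, the cops-and-robbers game is a cop-win on $G_n$. A classical theorem (Quilliot; Nowakowski--Winkler; Bonato--Nowakowski) then implies that the clique complex of $G_n$---which is precisely $\Rcal(n,r_n)$---is collapsible, and in particular contractible. This is exactly the cops-and-robbers connection advertised in the abstract.

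The first step is a standard covering argument. For a small constant $\delta=\delta(K,\nu)>0$ and a suitably large $c=c(K,\nu)$, the event $\Ecal_n$ that every point of $K$ lies within distance $\delta r_n$ of some sample $X_i$ holds a.a.s.: the number of balls needed to cover $K$ by a $\delta r_n$-net is polynomial in $n$, and each such ball has $\nu$-mass at least a constant multiple of $r_n^d$, so the logarithmic factor in the threshold is precisely what makes the union bound succeed.

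On $\Ecal_n$, I fix a reference point $p$ deep in the interior of $K$ and order the vertices in decreasing order of $\|\cdot-p\|$, breaking ties arbitrarily. To construct a dominator for $v_i$, I move $v_i$ radially toward $p$, defining
\[
v_i' := v_i + \eta r_n \, \frac{p - v_i}{\|v_i - p\|},
\]
for a small $\eta=\eta(K,\nu)>0$; this point lies in $K$ by convexity, and I let $v_j$ be any sample point in $B(v_i',\delta r_n)$, which is nonempty on $\Ecal_n$. By construction $v_j\in N[v_i]$ and $\|v_j-p\|<\|v_i-p\|$, so $j>i$. It remains to verify that every $v_k \in N[v_i]$ with $k>i$ also belongs to $N[v_j]$, i.e.~$\|v_k - v_j\| \leq r_n$. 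The ordering forces $v_k$ into the ball $B(p,\|v_i-p\|)$, and the key geometric estimate is the quantitative tangent-to-ball inequality
\[
(v_k - v_i)\cdot\frac{v_i-p}{\|v_i-p\|} \leq -\frac{\|v_k-v_i\|^2}{2\|v_i-p\|},
\]
which, combined with the known displacement of $v_j$ from $v_i$, bounds $\|v_k-v_j\|^2$ after an elementary expansion.

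The main obstacle is calibrating $\eta$, $\delta$ and $c$ so that this bound really is at most $r_n^2$ uniformly in $v_i$: the quadratic gain from the tangent inequality scales as $\eta r_n/\|v_i-p\|$, whereas the error from the random placement of $v_j$ inside $B(v_i',\delta r_n)$ scales as $\delta$, and balancing them requires care; when $\|v_k-v_i\|$ is small one falls back on the triangle inequality, but the regime $\|v_k-v_i\|\approx r_n$ with $v_k$ nearly perpendicular to $v_i-p$ is tight. The argument degenerates entirely when $v_i$ itself sits very close to $p$, but then every remaining sample lies within a ball of radius comparable to $r_n$ around $p$ and already forms a clique, which is trivially dismantlable. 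I expect the cops-and-robbers framework to be the natural way to organise this, because the radial displacement used above is exactly what a single cop should do while chasing a robber in a convex domain---a strategy that should guide both the choice of ordering and the correct refinement of the potential function near $p$.
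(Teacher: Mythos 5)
Your proposal is built on showing that the \emph{whole} random geometric graph $G_n$ is dismantlable (equivalently, cop-win) a.a.s.~in the regime $r_n \geq c\left(\ln n/n\right)^{1/d}$, and then invoking the Aigner--Fromme/Quilliot theorem to conclude that its clique complex is collapsible. The difficulty is not the one you flag at the end of your argument (calibrating $\eta,\delta$ near $p$); it is that the statement you are trying to prove is \emph{false} in this regime. It is known (Beveridge, Dudek, Frieze, M\"uller, \cite{RGGcops}) that the two-dimensional random geometric graph is a.a.s.~\emph{not} cop-win for all $r_n$ up to a constant times $\ln n/\sqrt{n}$, which is a factor $\Omega(\sqrt{\ln n})$ above the contractibility threshold of Theorem~\ref{thm:contract}. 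The paper even records a conjecture that $\Rcal(n,r_n)$ is a.a.s.~not collapsible for some $r_n \gg \left(\ln n/n\right)^{1/d}$. So global dismantlability is the wrong target; contractibility is genuinely weaker here.

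One can also see where your quantitative calculation is bound to fail. With $D:=\norm{v_i-p}$, your tangent-to-ball estimate yields, for $a:=\norm{v_k-v_i}\leq r_n$,
\[
\norm{v_k-v_j}^2 \;\leq\; a^2\left(1 - \frac{\eta r_n}{D}\right) + 2a\delta r_n + (\eta+\delta)^2 r_n^2 .
\]
The quadratic gain $\eta r_n/D$ vanishes as $n\to\infty$ whenever $D$ stays of constant order, which it does for a typical vertex since $p$ is a fixed reference point in $K$. Meanwhile $2\delta + (\eta+\delta)^2$ is a fixed positive constant. There is no choice of constants $\eta,\delta>0$ that makes the right-hand side at most $r_n^2$ uniformly in $v_i$; if you instead let $\delta,\eta\to 0$ with $n$, the covering step breaks down and forces $r_n$ to be of a strictly larger order than $\left(\ln n/n\right)^{1/d}$.

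The paper's proof avoids this by making the cop-win idea \emph{local} rather than global. It covers $K$ by sets $A_1,\dots,A_N$ of diameter $O(r_n)$ (Lemma~\ref{lem:motor}), applies the nerve theorem twice to reduce Theorem~\ref{thm:contract} to showing that each nonempty intersection $\bigcap_{i\in I}\Delta_i$ of induced subcomplexes is contractible and that the two nerves agree, and then runs exactly the kind of domination/dismantling argument you have in mind \emph{inside} each intersection. Crucially, the role of your reference point $p$ is played by the point $X_{j_1}$ of the intersection closest to all the others, and the relevant distance $\norm{X_{j_\ell}-X_{j_1}}$ is $O(r_n)$ rather than $O(1)$; this is what makes the analogue of the ratio $\eta r_n/D$ a genuine constant, so that the gain beats the error. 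The $W(x,y,r)$ sets of Lemma~\ref{lem:Wxyr} encode precisely the region where a dominating vertex must land, and the covering Lemma~\ref{lem:cover} (your $\Ecal_n$) guarantees a sample point is found there a.a.s. Your instinct about the cop-chasing-in-a-convex-domain picture and the covering step is correct; what is missing is the localization via the nerve theorem, which is the essential new idea of the proof.
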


This theorem answers a question of Kahle~\cite{Kahle11} (bottom of page 569), who asked whether 
Theorem~\ref{thm:contract} holds in the case when $K$ has smooth boundary in addition to being 
a convex body and $\nu$ is the uniform distribution on $K$.

If we assume $K$ is smoothly bounded then we can let go of the condition that $K$ be convex and obtain the 
following. Here and in the rest of the paper we use $\simeq$ to denote homotopy equivalence. 

\begin{theorem}\label{thm:smooth}
If $K\subseteq\eR^d$ is a compact, smooth $d$-manifold  with boundary then there are constants $c_1=c_1(K,\nu), c_2=c_2(K)$ such that 
$\Rcal(n,r_n) \simeq K$ a.a.s.~whenever $c_1 \left(\frac{\ln n}{n}\right)^{1/d} \leq r_n \leq c_2$. 
\end{theorem}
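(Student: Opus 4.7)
My plan is to prove Theorem~\ref{thm:smooth} by a nerve-theorem argument: cover $K$ by small pieces on which (an extension of) Theorem~\ref{thm:contract} applies, then patch the local contractibility together. Since $K$ is a compact smooth $d$-manifold with boundary, it has positive reach $\tau(K) > 0$ and a uniformly bounded second fundamental form on $\partial K$. I would fix a finite $c_2$-net $\{p_1, \dots, p_m\}$ of $K$ and set $V_i = B(p_i, 2c_2)$, with $c_2 = c_2(K)$ chosen small enough (e.g.\ $c_2 \ll \tau(K)$) that every nonempty intersection $V_I \cap K$ (where $V_I := \bigcap_{i \in I} V_i$) is diffeomorphic to a Euclidean ball or half-ball, hence contractible. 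Then $\{V_i \cap K\}_{i=1}^m$ is a good open cover of $K$, and by the Nerve Theorem $K \simeq N(\{V_i \cap K\})$.

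Next, I would transfer this cover to the Vietoris--Rips complex. For each $I \subseteq [m]$, let $A_I = \{X_j : X_j \in V_I\}$ and let $\mathcal{R}_I$ denote the full subcomplex of $\mathcal{R}(n, r_n)$ on the vertex set $A_I$; in particular $\mathcal{R}_I = \bigcap_{i \in I} \mathcal{R}_{\{i\}}$. The family $\{\mathcal{R}_{\{i\}}\}_i$ covers $\mathcal{R}(n, r_n)$: any simplex has diameter at most $r_n \leq c_2$, so it sits inside $V_i = B(p_i, 2c_2)$ whenever $p_i$ is within $c_2$ of one of its vertices. Each nonempty $V_I \cap K$ has $\nu$-mass bounded away from $0$ uniformly in $I$, so $A_I$ is nonempty a.a.s.\ whenever $V_I \cap K \neq \emptyset$, giving $N(\{\mathcal{R}_{\{i\}}\}) = N(\{V_i \cap K\})$ a.a.s. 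Hence, if I can show every nonempty $\mathcal{R}_I$ is contractible a.a.s., then Bj\"orner's patchwork (nerve) lemma for subcomplexes yields $\mathcal{R}(n, r_n) \simeq N(\{\mathcal{R}_{\{i\}}\}) \simeq K$ a.a.s., completing the proof.

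The main work is therefore to establish contractibility of each $\mathcal{R}_I$. When $V_I \subseteq \mathrm{int}(K)$, $V_I$ is a convex body and the conditional measure $\nu(\cdot \mid V_I)$ is uniformly positive on it, so Theorem~\ref{thm:contract} applies directly: $|A_I|$ concentrates around $n \cdot \nu(V_I)$ with $\nu(V_I)$ bounded below uniformly, so $r_n \geq c_1 (\ln n / n)^{1/d}$ implies the analogous bound $r_n \geq c(K, V_I)(\ln |A_I|/|A_I|)^{1/d}$ needed by Theorem~\ref{thm:contract}, provided $c_1$ is chosen sufficiently large. The genuinely new case is when $V_I$ meets $\partial K$: then $V_I \cap K$ is not convex and Theorem~\ref{thm:contract} is not directly applicable. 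I expect the main obstacle to be extending Theorem~\ref{thm:contract} to this setting. As suggested in the abstract, Theorem~\ref{thm:contract} is proved via a local cop-and-robber/dismantlability argument; such arguments are essentially local in nature, so they should extend from convex bodies to smooth supports whose curvature scale is small relative to $r_n$ (ensured by taking $c_2 \ll \tau(K)$), but making this precise near $\partial K$ is where the technical effort concentrates.
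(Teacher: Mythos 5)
Your high-level plan---take a good cover of $K$, transfer it to a cover of $\Rcal(n,r_n)$ by full subcomplexes, and apply both versions of the nerve theorem---is the same strategy the paper uses. The difference is the \emph{scale} of the cover. You use a fixed $c_2$-net, so each piece $V_I\cap K$ has diameter $\Theta(1)$. The paper instead covers at the shrinking scale $\Theta(r)$, with sets $A_i=B(x_i,s_i)\cap K$, $3r\le s_i\le 4r$ (Lemma~\ref{lem:motorsmooth}). That choice is what makes the local contractibility step tractable: inside an intersection of diameter $O(r)$, the dismantling/cop-win argument of Lemma~\ref{lem:del} applies directly, using the region $W(x,y,r)$ of~\eqref{eq:Wdef}. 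At your fixed scale, each $\Rcal_I$ is a full Rips complex over a region of diameter $\Theta(1)$ carrying $\Theta(n)$ vertices; showing it is contractible is not a smaller problem, it is Theorem~\ref{thm:smooth} itself restricted to a contractible domain, and that domain still has a boundary.

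That is the genuine gap, which you correctly flag but do not close. The actual content of the paper's extension from convex $K$ to smooth $K$ is two quantitative ``local convexity'' lemmas: Lemma~\ref{lem:inrsmooth} (for $r$ small, $B(x,r)\cap K$ contains a ball of radius $\delta_3 r$) and Lemma~\ref{lem:locallystarshaped} ($K$ is locally star-shaped at scale $r$ around interior ball centres). These substitute for convexity throughout the proof of Lemma~\ref{lem:motor}, giving Lemma~\ref{lem:motorsmooth}, and they are established by real work (a uniform atlas with first- and second-derivative bounds, Lemma~\ref{lem:boundedderivatives}, plus a Taylor estimate). Saying that the dismantling argument ``should extend'' because it is local is the right intuition, but it is not a proof; positive reach alone does not hand you the bounds used in Lemma~\ref{lem:motorsmooth}(v). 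A secondary problem with your reduction: the pieces $V_I\cap K$ that meet $\partial K$ are balls intersected with a manifold-with-boundary, so they have corners and are themselves neither convex bodies nor smooth manifolds with boundary, and you cannot simply invoke a ``contractible-manifold'' special case of the theorem you are trying to prove. In effect you would have to re-run the scale-$r$ nerve argument inside each $V_I$, so the intermediate fixed-scale cover does not actually save any work over the paper's single scale-$r$ cover.
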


Note that the condition $r_n \leq c_2(K)$ is in general necessary. If for instance $r \geq \diam(K)$ then 
$\Rcal(n,r_n)$ will be the complete simplicial complex on $n$ vertices, and in particular contractible, regardless 
of the precise homotopy type of $K$.
For clarity, we emphasize that in both Theorem~\ref{thm:contract} and~\ref{thm:smooth}, the metric used for the construction 
of the Vietorips--Rips complex is 
the Euclidean metric on the ambient space $\eR^d$.


\paragraph{Related work.} A widely studied subcomplex of the Vietoris--Rips complex is the \emph{\v{C}ech complex}, where a set 
of points spans a simplex if and only if the balls of radius $r/2$ around them have a non-empty intersection. 
Niyogi, Smale and Weinberger~\cite{NSW08} obtained a result for random \v{C}ech complexes that is similar to Theorem~\ref{thm:smooth}, 
and Kahle~\cite{Kahle11} proved a statement analogous to Theorem~\ref{thm:contract} for random \v{C}ech complexes under the uniform
probability measure. 
Homological connectivity is a notion closely related to contractibility. 
Results on homological connectivity of random \v{C}ech complexes can be found in~\cite{Bob19+,dKTV19+,IY20,Kahle11}. 

There is a substantial literature on abstract combinatorial models of random simplicial complexes, including the 
Linal--Meshulam model introduced by Linial and Meshulam in~\cite{LinialMeshulam06}, the random $d$-complex introduced by
Meshulam and Wallach in~\cite{MeshulamWallach09}, and the random clique complex introduced by Kahle in~\cite{Kah09}.
The random clique complex is the clique complex (see the next section for the precise definition) 
of the Erd\H{o}s--R\'enyi random graph and thus in a sense
analogous to the random Vietoris--Rips complex, which is the clique complex of the random geometric graph.
The papers of Kahle~\cite{Kah09,Kah14} and Malen~\cite{Mal19+} contain results 
in the same spirit as ours for this model.

\section{Notation and preliminaries\label{sec:prelim}}

Here we list some notations, definitions and results we will use in the proofs.
For $k \in \eN$ a positive integer we write $[k] := \{1,\dots, k\}$.

A {\em simplicial complex} is a pair $\Delta = (V,\Sigma)$ with $V$ a finite set and 
$\Sigma \subseteq 2^V$ closed under taking subsets.
That is, the elements of $\Sigma$ are subsets of $V$ and $\sigma \in \Sigma$ and $\tau \subseteq \sigma$ implies that also
$\tau \in \Sigma$. The elements of $V$ are called vertices and the elements of $\Sigma$ simplicies.
The standard geometric realization of a complex $\Delta = (V,\Sigma)$ is given by 

$$ \norm{\Delta} := \bigcup_{\sigma \in \Sigma} \conv\left( \{ e_i : v_i \in \sigma \} \right), $$

\noindent
where $e_1,\dots,e_n$ denote the standard basis vectors for $\eR^n$ with $n := V$ and we fix some enumeration 
$V = \{ v_1,\dots, v_n\}$.
A topological space $X$ is {\em triangulable} if there exists a simplicial complex $\Delta$ such that 
$X$ is homeomorphic to $\norm{\Delta}$.
We remark that if $K \subseteq \eR^d$ is a convex body then $K$ is homeomorphic to the convex hull of the standard basis 
in $\eR^{d+1}$ and in particular triangulable.
If $K$ is a compact, smooth $d$-manifold with boundary then $K$ is also triangulable by standard results 
in topology (see for instance~\cite{Munkres66}, Theorem 10.6).

If $X, Y$ are two topological spaces and $f, g : X \to Y$ continuous maps then 
$f$ and $g$ are {\em homotopic} (denoted $f\simeq g$) if there exists a 
continuous map $\varphi : X \times [0,1]\to Y$ such that $\varphi(.,0)=f$ and $\varphi(.,1)=g$.
The spaces $X, Y$ are {\em homotopy equivalent} (notation $X \simeq Y$) if there exists
maps $f_1 : X \to Y, f_2 : Y \to X$ such that $f_2 \circ f_1 \simeq \text{id}_{X_1}$ and
$f_1\circ f_2 \simeq \text{id}_{X_2}$.
We say $X$ is {\em contractible} if it is homotopy equivalent to a single point.
If $X$ is a topological space and $\Delta$ a simplicial complex then $X \simeq \Delta$ denotes 
that $X$ and $\norm{\Delta}$ are homotopy equivalent. 
Similarly, for $\Delta_1, \Delta_2$ simplicial complexes $\Delta_1\simeq\Delta_2$ denotes
their geometric realizations are homotopy equivalent. We say $\Delta$ is contractible if
its geometric realization $\norm{\Delta}$ is.

The {\em nerve} of a family of sets $\Acal = (A_i)_{i\in I}$ is the simplicial complex $\Ncal = \Ncal(\Acal)$ with vertex set $I$ 
where a finite set $\sigma \subseteq I$ is a simplex of $\Ncal$ if and only if 
$\bigcap_{i\in \sigma} A_i \neq \emptyset$.
We will use two different versions of the {\em nerve theorem}, stated as Theorems 10.6 and 10.7 in~\cite{Bjorner95}.

\begin{theorem}[Nerve theorem, combinatorial version]\label{thm:nervecomb}
Let $\Delta$ be a simplicial complex and let $(\Delta_i)_{i\in I}$ be a family of subcomplexes
such that $\Delta = \bigcup_{i\in I}\Delta_i$, and 
every nonempty finite intersection $\Delta_{i_1} \cap \dots \cap \Delta_{i_k}$ is contractible.
Then $\Ncal(\Delta) \simeq \Delta$.
\end{theorem}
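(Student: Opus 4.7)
The plan is to prove this by induction on $|I|$, handling the finite case directly and reducing the general case to it by a standard compactness/direct-limit argument (any finite subcomplex of $\Delta$ meets only finitely many $\Delta_i$, and homotopy equivalence between CW complexes is detected at the finite-subcomplex level). For the base case $|I|=1$ the subcomplex $\Delta_1=\Delta$ is contractible by hypothesis and its nerve is a single point, so $\Ncal(\Delta)\simeq\Delta$ trivially.

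For the inductive step, fix some $i_0\in I$, set $\Delta':=\bigcup_{i\neq i_0}\Delta_i$ and write $\Delta=\Delta'\cup\Delta_{i_0}$. The key observation is that the family $(\Delta_i\cap\Delta_{i_0})_{i\neq i_0}$ covers $\Delta'\cap\Delta_{i_0}$ and still satisfies the contractible-intersection hypothesis, since $\bigcap_{i\in\sigma}(\Delta_i\cap\Delta_{i_0})=\bigcap_{i\in\sigma\cup\{i_0\}}\Delta_i$. Applying the inductive hypothesis to $(\Delta_i)_{i\neq i_0}$ and to $(\Delta_i\cap\Delta_{i_0})_{i\neq i_0}$ yields $\Delta'\simeq\Ncal'$, where $\Ncal'$ is the sub-nerve on $I\setminus\{i_0\}$ (i.e.\ the subcomplex of $\Ncal(\Delta)$ consisting of simplices not containing $i_0$), and $\Delta'\cap\Delta_{i_0}\simeq\text{lk}_{\Ncal(\Delta)}(i_0)$. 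On the nerve side, $\Ncal(\Delta)=\Ncal'\cup\text{st}_{\Ncal(\Delta)}(i_0)$ with $\Ncal'\cap\text{st}_{\Ncal(\Delta)}(i_0)=\text{lk}_{\Ncal(\Delta)}(i_0)$. Both $\Delta_{i_0}$ (by the $|\sigma|=1$ case of the hypothesis) and $\text{st}_{\Ncal(\Delta)}(i_0)$ (being a cone with apex $i_0$) are contractible. Since inclusions of subcomplexes are cofibrations, the gluing lemma for homotopy pushouts then delivers
$$\Delta\simeq\Delta'\cup_{\Delta'\cap\Delta_{i_0}}\Delta_{i_0}\simeq\Ncal'\cup_{\text{lk}_{\Ncal(\Delta)}(i_0)}\text{st}_{\Ncal(\Delta)}(i_0)=\Ncal(\Delta).$$

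The main obstacle is making sure the three inductive equivalences $\Delta'\simeq\Ncal'$, $\Delta'\cap\Delta_{i_0}\simeq\text{lk}_{\Ncal(\Delta)}(i_0)$, and $\Delta_{i_0}\simeq\text{st}_{\Ncal(\Delta)}(i_0)$ fit into a homotopy-commutative cube with the natural inclusion maps, so that the gluing lemma actually applies; a naive induction only gives the equivalences as unrelated homotopy classes. One clean way out is to strengthen the inductive statement so that the equivalence is produced as a \emph{natural} map in a sense that is preserved under restriction to intersections. A more elegant alternative, which sidesteps the compatibility issue altogether, is the homotopy-colimit viewpoint: form the blow-up complex $M:=\bigcup_{\sigma\in\Ncal(\Delta)}\|\sigma\|\times\bigl|\bigcap_{i\in\sigma}\Delta_i\bigr|$, glued along the obvious face relations, and show that the two canonical projections $M\to\|\Delta\|$ and $M\to\|\Ncal(\Delta)\|$ are homotopy equivalences via a Quillen-type fiber lemma — the first because the preimage of a point $x\in\|\Delta\|$ is the geometric realization of a full simplex on $\{i:x\in\|\Delta_i\|\}$, and the second because the preimage of an interior point of $\sigma$ is $\bigl|\bigcap_{i\in\sigma}\Delta_i\bigr|$, contractible by hypothesis.
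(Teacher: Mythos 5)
The paper does not actually prove this theorem: it is quoted as a background result, citing Theorems 10.6 and 10.7 of Bj\"orner's survey (your reference \cite{Bjorner95}). So there is no ``paper's own proof'' to compare against; what I can do is assess your argument on its merits.

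Your first approach --- induction on $|I|$ combined with the gluing lemma for homotopy pushouts --- is set up correctly: the identification of the nerve of $(\Delta_i\cap\Delta_{i_0})_{i\neq i_0}$ with $\mathrm{lk}_{\Ncal(\Delta)}(i_0)$ is right, the decomposition $\Ncal(\Delta)=\Ncal'\cup\mathrm{st}(i_0)$ with intersection the link is right, and the matching pushout decomposition of $\Delta$ is right. But the compatibility gap you yourself flag is a genuine one, not a technicality: the inductive hypothesis as stated hands you three abstract homotopy equivalences between the corners of the two pushout squares, with no promise that they assemble into a map of spans, and the gluing lemma needs a commutative (or homotopy-commutative, with coherence) square. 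Repairing this by strengthening the induction to produce a \emph{specific} equivalence that restricts well is possible but delicate --- essentially one ends up rebuilding the homotopy-colimit machinery by hand --- so as written the first argument is incomplete.

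Your second approach, via the Mayer--Vietoris blow-up complex $M$ and the two projections, is the standard modern proof and is essentially correct. Two small points of care. First, the projection $M\to\|\Ncal(\Delta)\|$ does not have a single fiber type: over a point in the open cell of a face $\tau\leq\sigma$ the fiber is $\bigl|\bigcap_{i\in\tau}\Delta_i\bigr|$, not $\bigl|\bigcap_{i\in\sigma}\Delta_i\bigr|$; all these are contractible, but ``contractible point-preimages'' alone does not yield a homotopy equivalence for an arbitrary continuous surjection, so you should either invoke the fact that $M$ is $\mathrm{hocolim}_{\sigma\in\Ncal}\bigl|\bigcap_{i\in\sigma}\Delta_i\bigr|$ and compare it to $\mathrm{hocolim}_{\sigma\in\Ncal}\,\mathrm{pt}=\|\Ncal\|$ via a levelwise equivalence of diagrams, or invoke a theorem with explicit niceness hypotheses (Smale's Vietoris-type theorem, or an induction over skeleta). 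Second, for $M\to\|\Delta\|$, your description of the fiber over $x$ as the realization of a full simplex on $J_x=\{i:x\in|\Delta_i|\}$ is correct, but it is worth noting why every subset of $J_x$ lies in $\Ncal$: the open simplex of $\Delta$ carrying $x$ lies in each $\Delta_i$ with $i\in J_x$, hence in every finite intersection over a subset of $J_x$. With those points made precise, the second argument is a sound proof of the theorem, and in fact a standard one.
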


\begin{theorem}[Nerve theorem, geometric version]\label{thm:nervetop}
Let $X$ be a triangulable space and $\Acal = (A_i)_{i\in I}$ 
either 
a locally finite family of open subsets 
or a finite family of closed subsets, 
such that $X = \bigcup_{i\in I} A_i$.
If every nonempty finite intersection $A_{i_1}\cap \dots \cap A_{i_k}$ is contractible, then $\Ncal(\Acal)\simeq X$. 
\end{theorem}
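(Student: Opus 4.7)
The plan is to derive Theorem~\ref{thm:nervetop} from the combinatorial Theorem~\ref{thm:nervecomb} via a simplicial approximation of the cover. Since $X$ is triangulable, I fix a triangulation $T$ of $X$ and identify $X$ with $\norm{T}$. The aim is to build a subcomplex cover of $T$ whose nerve coincides with $\Ncal(\Acal)$ and whose finite intersections are contractible, so that Theorem~\ref{thm:nervecomb} gives $\Ncal(\Acal) \simeq T \simeq X$.

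For the locally finite open case, I would refine $T$ by iterated barycentric subdivision, using a Lebesgue-number style argument against $\Acal$ (valid since $X$ is paracompact, being triangulable), until the closed star of every vertex is contained in at least one $A_i$. After assigning each vertex $v$ to some index $\iota(v) \in I$ with $\overline{\mathrm{st}}(v) \subseteq A_{\iota(v)}$, I let $\Delta_i$ be the full subcomplex spanned by the vertices assigned to $i$, together with the simplices inherited from the refined $T$. Then $\bigcup_i \Delta_i = T$ and, for any non-empty $\Delta_{i_1} \cap \dots \cap \Delta_{i_k}$, the corresponding underlying polyhedron sits inside $A_{i_1} \cap \dots \cap A_{i_k}$. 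A simplicial-approximation argument identifies the combinatorial intersection, up to homotopy, with the genuine intersection, which is contractible by hypothesis. The nerve of $(\Delta_i)$ equals $\Ncal(\Acal)$ by construction, so Theorem~\ref{thm:nervecomb} finishes this case.

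The finite closed case reduces to the open one by a normal-thickening argument: triangulability implies metrizability, hence normality, so each $A_i$ admits an open neighbourhood $U_i$ such that every $U_{i_1} \cap \dots \cap U_{i_k}$ deformation retracts onto the corresponding closed intersection $A_{i_1} \cap \dots \cap A_{i_k}$. In particular $(U_i)$ is a locally finite (in fact finite) open cover with the same nerve as $\Acal$ and with contractible finite intersections, so the open case applies.

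The main obstacle is the step in which contractibility of a combinatorial intersection $\Delta_{i_1} \cap \dots \cap \Delta_{i_k}$ is deduced from contractibility of the set-theoretic intersection $A_{i_1} \cap \dots \cap A_{i_k}$: one needs the subdivision to be fine enough that a natural inclusion/approximation map is a genuine homotopy equivalence, not merely a homology equivalence. This is the classical technical heart of the Borsuk--Leray nerve theorem, and is exactly where triangulability of $X$ is used; without it one would have to argue through the homotopy colimit of the cover diagram and invoke an abstract gluing lemma to recover the same conclusion.
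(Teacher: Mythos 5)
The paper does not prove this theorem; it is quoted from Bj\"orner's survey (Theorems~10.6 and~10.7 of~\cite{Bjorner95}), where the argument proceeds via partitions of unity and the Mayer--Vietoris blowup (homotopy colimit) of the cover diagram, not via simplicial approximation. Your plan to reduce the geometric version to the combinatorial one is natural, but as written it has two genuine gaps.

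First, your construction of the $\Delta_i$ does not cover $T$. If each vertex $v$ receives a \emph{single} label $\iota(v)$ and $\Delta_i$ is the induced subcomplex on $\iota^{-1}(i)$, then a simplex whose vertices carry two different labels lies in no $\Delta_i$, so $\bigcup_i\Delta_i\subsetneq T$ and Theorem~\ref{thm:nervecomb} cannot be invoked. One must either allow every label $i$ with $\overline{\mathrm{st}}(v)\subseteq A_i$, or (better) take $\Delta_i$ to consist of all simplices whose geometric realisation lies in $A_i$. Second, and this is the step you yourself flag as the ``technical heart'', the inclusion of $\Delta_{i_1}\cap\dots\cap\Delta_{i_k}$ into the contractible set $A_{i_1}\cap\dots\cap A_{i_k}$ is not a homotopy equivalence for a general finite subdivision, and the simplicial approximation theorem (which concerns approximating \emph{maps}, not recovering the homotopy type of open sets by subcomplexes) does not deliver this. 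In fact, for a contractible open $U$ inside a polyhedron it can happen that \emph{no} iterated barycentric subdivision $T^{(n)}$ makes $\{\sigma\in T^{(n)}: |\sigma|\subseteq U\}$ contractible: the direct system $\pi_k(|T^{(n)}|_U|)$ is only pro-trivial, not trivial at any stage (Whitehead-manifold--type examples). Proving the needed stabilisation under a niceness hypothesis on the intersections is essentially a local form of the nerve theorem, so the argument risks circularity. The closed case inherits the same problem and adds another: for arbitrary closed subsets of a triangulable space there is no reason open thickenings $U_i$ with your stated deformation-retraction property onto every $A_{i_1}\cap\dots\cap A_{i_k}$ exist; that would require all these intersections to sit as ANR pairs in $X$, which is not hypothesised. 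In short, the reduction strategy is reasonable in outline but, without importing the partition-of-unity/homotopy-colimit technology that the standard proof uses directly, the central contractibility transfer is unjustified.
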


For $G$ a graph and $v \in V(G)$ a vertex of $G$ we denote by $G\setminus v$ the graph 
with the vertex $v$ and all incident edges removed. By $N_G(v)$ we denote the set of neighbours of $v$ and 
by $N_G[v] := N_G(v) \cup \{v\}$ its {\em closed neighbourhood}.
If no confusion can arise we drop the subscript and simply write $N(v), N[v]$.

If $G$ is a graph, then $\Kcal(G)$ denotes its {\em clique complex}, or {\em flag complex}.
That is, $\Kcal(G)$ has vertex set $V(G)$ and a subset of the vertices is declared a simplex if and only if
it spans a complete subgraph of $G$.
So in particular, $\Rcal(n,r) = \Kcal( G(n,r) )$ where $G(n,r)$ denotes the random geometric graph
with vertices $X_1, \dots, X_n$ and an edge $X_iX_j$ if and only if $\norm{X_i-X_j} \leq r$.

We will use $B(x,r) := \{ y \in \eR^d : \norm{x-y} < r \}$ to denote the open ball of radius $r$ around $x$.
We use $\vol(\cdot)$ to denote the $d$-dimensional volume (Lebesgue measure) and 
we denote by $\pi_d := \vol( B(\underline{0}, 1 ) )$ the volume of the $d$-dimensional unit ball.
For $X \subseteq \eR^d$ we denote its convex hull by $\conv(X)$, its {\em diameter} by $\diam(X) := \sup\{ \norm{x-y} : x,y\in X\}$ and 
we define its {\em inradius} by 

$$ \inr(X) := \sup\{ r : \exists x \text{ such that } B(x,r) \subseteq X \}. $$

\section{Proofs}

\subsection{The proof of Theorem~\ref{thm:contract}}

We start with the following key observation.

\begin{lemma}\label{lem:del}
If $G$ is a graph and $v\neq w \in V(G)$ are such that $N[v] \subseteq N[w]$ 
then $\Kcal( G ) \simeq \Kcal( G \setminus v )$.
\end{lemma}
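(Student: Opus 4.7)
The plan is to realise the hypothesis as an explicit \emph{fold} of $v$ onto $w$. Define the vertex map $f : V(G) \to V(G \setminus v)$ by $f(v) := w$ and $f(u) := u$ for $u \neq v$. First I would check that $f$ extends to a simplicial map $\tilde f : \Kcal(G) \to \Kcal(G \setminus v)$ of the clique complexes. The assumption $N[v] \subseteq N[w]$ is used here twice: it gives $v \in N[w]$ (so $vw$ is an edge, in particular $w \in V(G \setminus v)$), and it guarantees that for any clique $\sigma$ of $G$ containing $v$ we have $\sigma \setminus \{v\} \subseteq N(v) \subseteq N[w]$, so $f(\sigma) = (\sigma \setminus \{v\}) \cup \{w\}$ is still a clique.

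Second, I would verify that $\tilde f$ is a homotopy equivalence. Let $i : \Kcal(G \setminus v) \hookrightarrow \Kcal(G)$ be the natural inclusion; then $\tilde f \circ i = \mathrm{id}$ is immediate. For $i \circ \tilde f \simeq \mathrm{id}_{\Kcal(G)}$ I would use the straight-line homotopy
\[ H(x,t) \;:=\; (1-t)\, x + t\, \tilde f(x), \qquad t \in [0,1], \]
on $\norm{\Kcal(G)}$. The one thing to check is that $H$ takes values in $\norm{\Kcal(G)}$: if the support of $x$ is a simplex $\sigma$ containing $v$, then $\tilde f(x)$ has support $(\sigma \setminus \{v\}) \cup \{w\}$, so the segment between $x$ and $\tilde f(x)$ lies in the convex hull of the standard basis vectors indexed by $\sigma \cup \{w\}$. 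By the previous paragraph this set is a clique, hence a simplex of $\Kcal(G)$, and the segment lies in $\norm{\Kcal(G)}$.

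I do not anticipate any real obstacle — the whole argument reduces to the one-line observation that adding $w$ to a clique containing $v$ yields a clique, which is precisely the content of $N[v] \subseteq N[w]$. An equivalent, more topological rephrasing would be to cover $\Kcal(G) = \Kcal(G \setminus v) \cup \mathrm{st}(v)$, note that the closed star $\mathrm{st}(v)$ is a cone with apex $v$ while the link $\mathrm{lk}(v) = \Kcal(G[N(v)])$ is a cone with apex $w$ (using $w \in N(v)$ and $N(v) \subseteq N[w]$) so that both are contractible, and then deformation-retract $\mathrm{st}(v)$ onto its intersection $\mathrm{lk}(v)$ with $\Kcal(G \setminus v)$; I would pick whichever of these two presentations turns out to read more smoothly in context.
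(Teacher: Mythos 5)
Your proposal is correct and is essentially identical to the paper's proof: your simplicial map $\tilde f$ is exactly the paper's $f_1(x_1,\dots,x_n)=(x_1,\dots,x_{n-2},x_{n-1}+x_n,0)$, and your straight-line homotopy $H(x,t)=(1-t)x+t\tilde f(x)$ expands to precisely the paper's $\varphi(x,t)=(x_1,\dots,x_{n-2},x_{n-1}+tx_n,(1-t)x_n)$. The one sentence the paper spells out and you gloss over slightly is that it is $\sigma\cup\{w\}$ (not merely $(\sigma\setminus\{v\})\cup\{w\}$) that must be a clique so the segment stays in $\norm{\Kcal(G)}$, but that follows immediately from $v\in N[w]$ as you note.
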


\begin{proof}
Without loss of generality we can assume $V(G) = [n], v=n, w=n-1$.
We consider the standard geometric realizations $X_1 := \norm{\Kcal(G)}, X_2 := \norm{\Kcal(G\setminus v)}$ where 
we identify $\eR^{n-1}$ with $\eR^{n-1}\times\{0\} \subseteq \eR^n$ so that $X_2 \subseteq X_1$.
Put differently, we have

$$ X_1 = \bigcup_{A \subseteq [n], \atop \text{$A$ a clique in $G$}} \text{conv}(\{ e_i : i \in A\} ), 
\quad  X_2 = \bigcup_{A \subseteq [n]\setminus\{v\}, \atop \text{$A$ a clique in $G$}} \text{conv}(\{ e_i : i \in A\} ), $$

\noindent
with $e_1,\dots,e_n$ the standard basis for $\eR^n$.
%
We need to demonstrate the existence of maps $f_1 : X_1 \to X_2$ and $f_2 : X_2 \to X_1$ such that 
$f_1\circ f_2$ is homotopic to the identity on $X_2$ and $f_2 \circ f_1$ is homotopic to the identity on $X_1$.
We define these maps by

$$ f_1( x_1,\dots, x_n ) := (x_1,\dots,x_{n-2}, x_{n-1}+x_n, 0 ), \quad f_2( x ) := x. $$

\noindent
So $f_2 : X_2 \to X_1$ is simply ``the inclusion map'', but we need to verify that $f_1(x) \in X_2$ for every $x \in X_1$.
To this end, we first note that if $x=(x_1,\dots,x_n)$ satisfies $x_n=0$ then $f_1(x)=x \in X_2 \subseteq X_1$.
Suppose then $x_n \neq 0$ and let $A := \supp(x) = \{ i : x_i \neq 0 \}$ denote the support of $x$.
By definition of $X_1$, we have that $A$ is a clique of $G$ and $\sum_{i\in A} x_i = 1$ and $x_i > 0$ for all $i \in A$.
Setting $y := f_1(x)$ we have that $B := \supp(y) = (A \cup \{n-1\}) \setminus \{n\}$.
By definition of $f_1$ and $A$ and $B$

$$ \sum_{i \in B} y_i = \sum_{i\in A} x_i = 1, $$

\noindent
and $y_i > 0$ for all $i \in B$. So we just need to establish that $B$ is a clique of $G\setminus n$ in order
to verify that $y \in X_2$. In fact, it suffices to show $A \cup \{n-1\}$ is a clique of $G$.
To see this, we remark that $A$ is a clique in $G$ with 

$$ A \subseteq N[n] \subseteq N[n-1], $$

\noindent
using $x_n \neq 0$ for the first inclusion.
In particular each element of $A$ is either a neighour of $n-1$ or $n-1$ itself. 
It follows that $A \cup \{n-1\}$ is a clique indeed.

Since $x_n=0$ whenever $(x_1,\dots,x_n) \in X_2$, we have $f_1 \circ f_2 = \text{id}_{X_2}$.
It remains to see that $f_2 \circ f_1 \simeq \text{id}_{X_1}$. We define $\varphi : X_1 \times [0,1] \to X_1$ via

$$ \varphi( x_1,\dots, x_n, t ) := (x_1, \dots, x_{n-2}, x_{n-1} + t x_n, (1-t) x_n ). $$

The map $\varphi$ satisfies $\varphi(.,0) = \text{id}_{X_1}, \varphi(.,1) = f_2 \circ f_1$ and 
is obviously continuous, but we need to establish that $\varphi(x,t) \in X_1$ for all $t \in [0,1]$ and $x \in X_1$.
To this end, let $x = (x_1,\dots, x_n) \in X_1$ and $t \in [0,1]$ be arbitrary. 
If $x_n = 0$ then $\varphi(x,t) = x$. Let us thus assume $x_n \neq 0$, and again set 
$A := \supp(x), y := \varphi(x,t), B := \supp(y)$.
By definition of $\varphi$ we again have $B \subseteq A \cup \{n-1\}$ and $\sum_{i\in B} y_i = 1$ 
and $y_i > 0$ for all $i \in B$.
Repeating previous arguments, we find that $B$ is a clique of $G$ and hence $y \in X_1$.
\end{proof}

For $x, y\in \eR^d, r > 0$ we define:

\begin{equation}\label{eq:Wdef} 
W(x,y,r) := \{ z \in B(y,\norm{y-x}) : B(z,r) \supseteq B(x,r) \cap B(y,\norm{y-x} ) \}. 
\end{equation}

\begin{figure}[h!]
\begin{center}
 \input{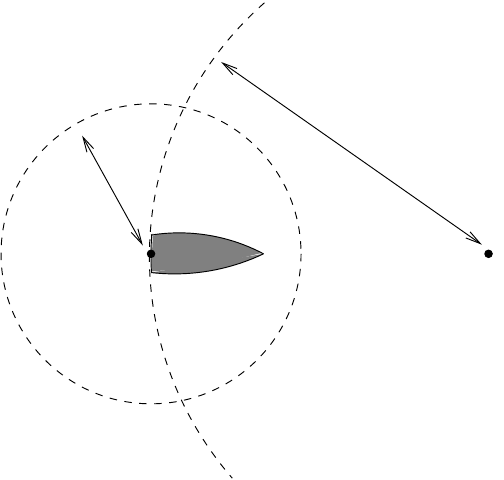_t}
\end{center}
 \caption{Depiction of the set $W(x,y,r)$, in the two-dimensional case.\label{fig:W}}
\end{figure}

See Figure~\ref{fig:W} for a depiction.
The idea behind this definition is as follows. 
Suppose $x,y$ are vertices of a subgraph $H$ of the random geometric graph, and suppose that 
$x$ is the vertex of $H$ that is furthest from $y$. Then any vertex in $z \in W(x,y,r) \cap V(H)$
satisfies $N_H[z] \supseteq N_H[x]$.

\begin{lemma}\label{lem:Wxyr}
For every $\lambda > 0$ there exists a $\delta_1 = \delta_1(\lambda)>0$ such that 
for all $r>0$ and $x, y \in \eR^d$ with $r \leq \norm{x-y} \leq \lambda r$, there is a
$z \in \conv(\{x,y\})$ such that $B(z, \delta_1 r ) \subseteq W(x,y,r)$.
\end{lemma}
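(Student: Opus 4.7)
The plan is to work in a convenient coordinate system, produce an explicit centre $z_0$ on the segment $\conv(\{x,y\})$ for which the lens $\cl{A} := \cl{B(x,r)}\cap \cl{B(y,\|x-y\|)}$ sits strictly inside $\cl{B(z_0,r)}$, and then spread this slack to a whole ball. Both the hypothesis and the conclusion scale linearly with $r$, so I may take $r=1$; after translating and rotating I may also put $y=0$ and $x=s\,e_1$ with $s := \|x-y\|\in[1,\lambda]$ (if $\lambda<1$ the hypothesis is vacuous). In this setup, $z\in W(x,y,1)$ is equivalent to $\|z\|<s$ together with $\max_{p\in \cl{A}}\|p-z\|\leq 1$, since $A$ is open.

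The core of the argument is to choose $\eta := \tfrac{1}{2\lambda}$, put $z_0 := (s-\eta)\,e_1$, and bound $M := \max_{p\in \cl{A}}\|p-z_0\|$. Since $p\mapsto \|p-z_0\|$ is convex, $M$ is attained on $\partial \cl A$, which is the union of two spherical caps $C_1\subseteq \partial B(x,1)$ and $C_2 \subseteq \partial B(y,s)$ meeting at the rim $\partial B(x,1)\cap \partial B(y,s)$. Parameterising $C_1$ by $p = x+u$ with $\|u\|=1$ and the cap constraint $u_1\leq -\tfrac{1}{2s}$ yields $\|p-z_0\|^2 = 1+\eta^2+2\eta u_1$, whose maximum over the cap is taken at $u_1=-\tfrac{1}{2s}$, i.e.\ on the rim; the analogous calculation on $C_2$ (writing $p=sv$ with $\|v\|=1$, $v_1\geq 1-\tfrac{1}{2s^2}$) gives the same value. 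Hence
\[
M^2 \;=\; 1+\eta^2-\tfrac{\eta}{s} \;\leq\; 1-\tfrac{1}{4\lambda^2},
\]
using $\eta=\tfrac{1}{2\lambda}$ and $s\leq\lambda$.

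To finish, I would set $\delta_1 := \tfrac{1}{8\lambda^2}$ and take any $z\in B(z_0,\delta_1)$. The triangle inequality gives $\max_{p\in \cl A}\|p-z\| \leq M+\|z-z_0\|< M + \delta_1$, and the elementary estimate $\sqrt{1-u}\leq 1-u/2$ for $u\in[0,1]$ makes $M+\delta_1 \leq 1$, so $B(z,1)\supseteq A$. A direct check also shows $\|z\| \leq s-\eta+\|z-z_0\| < s$ (using $\lambda\geq 1$, so $\delta_1<\eta$), so $z\in W(x,y,1)$; rescaling by $r$ then gives the lemma with $\delta_1(\lambda)=\tfrac{1}{8\lambda^2}$.

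The main difficulty I expect is the boundary optimisation in the middle step: one needs the observation that on each cap $\|p-z_0\|^2$ is linear in a single coordinate and hence peaks on the common rim, which is what collapses the answer to the clean expression $1+\eta^2-\eta/s$. If losing the explicit constant is acceptable, the last two steps can be avoided altogether by a softer argument: $W(x,y,1)$ is open and non-empty (as it contains $z_0$), and the parameter $s$ ranges over the compact interval $[1,\lambda]$, so a uniform inner radius $\delta_1(\lambda)>0$ follows from continuity and compactness.
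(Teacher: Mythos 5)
Your proof is correct and takes essentially the same approach as the paper: rescale to $r=1$, place $x,y$ on a coordinate axis, choose the centre $z$ on the segment at a distance proportional to $1/\lambda$ from $x$, and show that the lens $B(x,1)\cap B(y,\|x-y\|)$ lies strictly inside $B(z,1)$ with a uniform slack of order $1/\lambda^2$. The only cosmetic difference is that you extract the slack by maximising $\|p-z_0\|$ over the two boundary caps of the lens, whereas the paper bounds $\|u-z\|$ directly via a two-case split on the first coordinate of $u$; both lead to the same kind of bound and choice of constants.
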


\begin{proof}
By applying a suitable dilation and rigid motion if needed, we can assume
without loss of generality that $r=1, x = \underline{0}, y = (y_1, 0, \dots, 0)$ with $1 \leq y_1 \leq \lambda$.

We set 

$$z :=\left(\frac{1}{10\lambda}, 0, \dots, 0\right), \quad 
\delta_1 := \min\left( \frac{1}{10\lambda}, 1 - \left(1-\frac{1}{100\lambda^2}\right)^{1/2} \right). $$

We observe that, since $0 < 1/(10\lambda) < \lambda$, we have $z \in \conv(\{x,y\})$; and by 
the choice of $\delta_1$ we have $B(z,\delta_1) \subseteq B(y, \norm{y-x})$.

Pick an arbitrary $u = (u_1, \dots, u_d) \in B(x,1) \cap B(y, \norm{y-x} )$. 
To complete the proof, we need to show $u \in B(z',1)$ for every $z' \in B(z,\delta_1)$.
We remark that it in fact suffices to show that $\norm{u-z} < \left(1-\frac{1}{100\lambda^2}\right)^{1/2}$.
In order to prove that, we consider two cases.  
We first suppose that $u_1 \geq z_1 = \frac{1}{10\lambda}$. Since $\norm{u-\underline{0}} < 1$, we have
$\sum_{i\geq 2} u_i^2 < 1 - u_1^2$. Hence

$$ \norm{u - z}^2 = (u_1-z_1)^2 + \sum_{i\geq 2} u_i^2 < (u_1-z_1)^2 + 1 - u_1^2 = 
1 + z_1^2 - 2 u_1 z_1 \leq 1 - z_1^2 = 1 - \frac{1}{100\lambda^2}. $$

Let us now suppose that $u_1 < z_1$. Since $u \in B(y,\norm{y-x}) = B(y,\norm{y})$ we have $u_1 > 0$ and 
$\norm{u - y} < \norm{y}$.
Therefore 

$$\sum_{i\geq 2} u_i^2 < y_1^2 - (y_1-u_1)^2 < y_1^2 - (y_1-z_1)^2
= 2 y_1 z_1 - z_1^2  \leq 2\lambda z_1 - z_1^2, $$

\noindent
giving

$$ \norm{u-z}^2 =  (u_1-z_1)^2 + \sum_{i\geq 2} u_i^2 
< z_1^2 + 2\lambda z_1 - z_1^2 = 2 \lambda z_1 = \frac15 <  1 - \frac{1}{100\lambda^2}, 
$$

\noindent
using $0<u_1<z_1$ in the first inequality and $\lambda\geq 1$ in the second inequality. 
\end{proof}

The only probabilistic ingredient we will need is the lemma below on the probability that the balls of radius $r$ around the 
random points $X_1,\dots, X_n$ cover $K$. 

\begin{lemma}\label{lem:cover}
Suppose $K\subseteq\eR^d$ is a convex body. 
There is a $\delta_2 = \delta_2(K,\nu)$ such that if $r_n \geq \delta_2 \cdot \left(\frac{\ln n}{n}\right)^{1/d}$ then, a.a.s., 
$K \subseteq \bigcup_{i=1}^n B(X_i,r_n)$.
\end{lemma}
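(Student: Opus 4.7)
The plan is to use a standard epsilon-net plus union bound argument. First I would choose a finite set $P \subseteq K$ of ``grid points'' such that every point of $K$ is within distance $r_n/2$ of some point of $P$, and such that $|P| = O(r_n^{-d})$. This can be done, e.g., by taking a maximal $(r_n/2)$-packing of $K$, or by intersecting $K$ with a cubic grid of side length $r_n/(2\sqrt{d})$; boundedness of $K$ gives the cardinality bound. On the event that each ball $B(p,r_n/2)$ with $p\in P$ contains at least one sample point $X_i$, every $y\in K$ is within $r_n/2$ of some $p\in P$ and within $r_n/2$ of some $X_i$, so $y\in B(X_i,r_n)$ by the triangle inequality.

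It therefore suffices to show that, with probability tending to $1$, every $B(p,r_n/2)$ with $p\in P$ contains some sample point. I would bound $\nu(B(p,r_n/2))$ from below by $(\inf_K f)\cdot \vol(B(p,r_n/2)\cap K)$. The key geometric fact is that, for a convex body $K$, there is a constant $\kappa=\kappa(K)>0$ such that
$$ \vol(B(x,s)\cap K) \;\geq\; \kappa \cdot s^d \qquad \text{for all } x\in K,\ 0<s\leq \diam(K). $$
This holds because one can place inside $K$ a fixed cone of positive solid angle with apex any boundary point (using an interior ball of $K$ together with convexity), so a positive fraction of $B(x,s)$ lies inside $K$. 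Granting this, $\nu(B(p,r_n/2))\geq c\, r_n^d$ for a constant $c=c(K,\nu)>0$, uniformly in $p\in P$, provided $r_n$ is bounded (which we may assume; otherwise the statement is trivial).

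Now I apply a union bound:
$$ \Pee\bigl(\exists\, p\in P: B(p,r_n/2)\cap\{X_1,\dots,X_n\}=\emptyset\bigr) \;\leq\; |P|\cdot (1-c\, r_n^d)^n \;\leq\; C r_n^{-d}\exp(-c\, n r_n^d). $$
Plugging in $r_n \geq \delta_2 (\ln n/n)^{1/d}$ gives $n r_n^d \geq \delta_2^d \ln n$ and $r_n^{-d} \leq n/(\delta_2^d \ln n)$, so the bound is at most $\tfrac{C n}{\delta_2^d \ln n}\cdot n^{-c\delta_2^d}$. Choosing $\delta_2=\delta_2(K,\nu)$ large enough so that $c\delta_2^d>2$ makes this tend to $0$, which proves the lemma.

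The only place where non-routine care is needed is the geometric lower bound on $\vol(B(x,s)\cap K)$; this is the main obstacle in the sense that it is where convexity of $K$ enters. Everything else is a standard covering and union-bound calculation. (For the variant in Theorem~\ref{thm:smooth}, where $K$ is merely a compact smooth $d$-manifold with boundary, the same bound holds by a local chart argument, with $\kappa$ depending on the injectivity radius and bounds on the second fundamental form of $\partial K$.)
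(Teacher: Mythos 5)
Your proof is correct and follows essentially the same strategy as the paper: cover $K$ by $O(r_n^{-d})$ balls of radius $r_n/2$ centered at points of $K$, lower-bound the $\nu$-measure of each such ball using the convexity-based fact that $B(x,s)\cap K$ contains a ball of radius $\Omega(s)$ (this is the paper's Lemma~\ref{lem:inr}, which makes precise your cone argument), and conclude by a union bound. The only cosmetic differences are that the paper bounds the number of cover elements by $O(n/\ln n)$ via disjointness plus the total measure being $1$, whereas you bound it geometrically by $O(r_n^{-d})$; both suffice.
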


We will employ the following observation in the proof of Lemma~\ref{lem:cover} that will be of use to us later on as well.

\begin{lemma}\label{lem:inr}
 For every $\lambda > 0$ there exists a $\delta_3=\delta_3(\lambda)>0$ such that, for every convex $X \subseteq \eR^d$ 
 with $\diam(X) \leq \lambda \cdot \inr(X)$, for every 
 $0<r\leq \diam(X)$ and $x \in X$ there is a $z$ such that 
 
 $$ B(z,\delta_3 r) \subseteq B(x,r) \cap X. $$
 
\end{lemma}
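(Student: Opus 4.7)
The plan is to exploit the inscribed ball in $X$ together with convexity to produce a whole one-parameter family of balls inside $X$ emanating from $x$, and then to pick the one that just fits inside $B(x,r)$.

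Concretely, I would start by setting $\rho := \inr(X)$ and fixing $y_0 \in X$ with $B(y_0, \rho) \subseteq X$. The hypothesis gives $L := \norm{x-y_0} \leq \diam(X) \leq \lambda\rho$. The key observation is that for every $s \in [0,1]$, if we set $z_s := (1-s)x + s y_0$, then
$$ B(z_s, s\rho) \subseteq X. $$
Indeed, any point of this ball may be written $z_s + v$ with $\norm{v}\leq s\rho$, and (for $s>0$) we can rewrite it as $(1-s)x + s(y_0 + v/s)$ where $y_0 + v/s \in B(y_0,\rho) \subseteq X$, so by convexity it lies in $X$; the case $s=0$ is trivial.

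Next, in order to also have $B(z_s, s\rho) \subseteq B(x,r)$, it suffices that $\norm{z_s - x} + s\rho = s(L+\rho) \leq r$. I would therefore take
$$ s := \min\!\left(1,\; \frac{r}{L+\rho}\right), $$
and set $z := z_s$. In the case $s = r/(L+\rho)$, the resulting ball has radius
$$ s\rho \;=\; \frac{r\rho}{L+\rho} \;\geq\; \frac{r\rho}{(\lambda+1)\rho} \;=\; \frac{r}{\lambda+1}, $$
using $L \leq \lambda\rho$. In the case $s=1$ (i.e.\ $r \geq L+\rho$), the radius is $\rho \geq \diam(X)/\lambda \geq r/\lambda$. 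Either way, setting $\delta_3 := 1/(\lambda+1)$ yields $B(z,\delta_3 r) \subseteq B(x,r)\cap X$.

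There is no real obstacle here; the only thing to be mindful of is that one cannot simply push $s$ to $1$ because the resulting ball might stick out of $B(x,r)$, and conversely one cannot take $s$ too small or the radius $s\rho$ shrinks. Balancing these through the choice $s = \min(1, r/(L+\rho))$ is the whole content of the argument, and the fatness hypothesis $\diam(X) \leq \lambda \inr(X)$ is exactly what guarantees that the opening angle of the "cone of balls" from $x$ to $B(y_0,\rho)$ stays bounded away from zero.
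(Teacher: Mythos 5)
Your proposal is correct and uses essentially the same idea as the paper's own proof: forming the convex combination of $x$ with the center of an inscribed ball, producing a ``cone'' of balls inside $X$, and then choosing the parameter $s$ (the paper's $\mu$) so the resulting ball also sits inside $B(x,r)$. The only cosmetic difference is that you use a ball of the full inradius $\rho$ (which tacitly requires that the supremum defining $\inr(X)$ is attained -- true for bounded convex $X$, but worth a word), whereas the paper sidesteps this by working with a ball of radius $\inr(X)/2$, at the cost of the slightly worse constant $\delta_3 = 1/(1+2\lambda)$ versus your $1/(\lambda+1)$.
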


\begin{proof}
Applying a suitable dilation if needed, we can assume without loss of generality $\inr(X) = 1$.
By assumption, there exists $y \in X$ such that $B(y,1/2) \subseteq X$.
Applying a suitable translation if needed, we can assume without loss of generality that 
$y=\underline{0}$ is the origin.
By convexity, the set 

$$ B_\mu := 
\{ \mu z + (1-\mu) x : z \in B(\underline{0},1/2) \}. $$

\noindent
is contained in $X$ for every $0\leq \mu \leq 1$.
On the other hand $B_\mu$ is a ball of radius $\mu/2$ centred at the point $(1-\mu) x$.
In particular 

$$ B_\mu \subseteq B(x, \mu/2+ \norm{x-(1-\mu)x}) = B(x, \mu \cdot (1/2 + \norm{x})) \subseteq
B( x, \mu \cdot (1/2+\lambda) ). $$

So, if we set $\mu := r / (1/2+\lambda)$
then $B_\mu \subseteq B(x,r) \cap X$ is a ball of radius $\delta_3 r$ where $\delta_3=\delta_3(\lambda):= 1/(1+2\lambda)$.
\end{proof}

\begin{proofof}{Lemma~\ref{lem:cover}}
We set $r := \delta_2 \cdot \left(\ln n/n\right)^{1/d}$ where the constant $\delta_2$ will be determined
in the course of the proof. We write $s := r/4$ for convenience.
Next, we pick $x_1, \dots, x_N \in K$ such that $B(x_1, s), \dots, B(x_N,s)$ are disjoint and
$K \subseteq \bigcup_{i=1}^N B(x_i, 2s)$.
(Such $x_1,\dots, x_N$ can for instance be found by ``greedily'' constructing a maximal
packing of balls of radius $s$ with centers in $K$.)
By Lemma~\ref{lem:inr}, for each $i$, $B(x_i,s) \cap K$ contains a ball of radius $\delta_3 \cdot s$ 
with $\delta_3 = \delta_3( \diam(K)/\inr(K) )$.
Hence, writing $\nu_{\min} := \inf_{x\in K} f(x)$, 
we have, for each $i=1,\dots,N$:

\begin{equation}\label{eq:nuballLB} 
\nu( B(x_i,s) ) \geq \nu_{\min} \cdot \pi_d \cdot (\delta_3 s)^d
= \nu_{\min} \cdot \pi_d \cdot (\delta_2/4)^d \cdot \delta_3^d \cdot \left(\frac{\ln n}{n}\right). 
\end{equation}

\noindent
Since the balls $B(x_i, s)$ are disjoint, we have $1 \geq \sum_i \nu( B(x_i,s) )$.
Combining this with~\eqref{eq:nuballLB} gives $N = O( n/\ln n )$.
The inequality~\eqref{eq:nuballLB} also implies that 

$$ \begin{array}{rcl} 
\Pee( B(x_i, 2s) \cap \{X_1,\dots,X_n\} = \emptyset )
& = & \left(1-\nu(B(x_i, 2s))\right)^n  \\
& \leq & \left(1 - \nu_{\min} \cdot \pi_d \cdot \delta_2^d \cdot \delta_3^d \cdot \left(\frac{\ln n}{n}\right) \right)^n \\
& \leq & \exp\left[ - \ln n \cdot  \nu_{\min} \cdot \pi_d \cdot (\delta_2/4)^d \cdot \delta_3^d \right], 
\end{array} $$

\noindent
for each $i$. 
As $K \subseteq \bigcup_{i=1}^N B(x_i, 2s)$ and $r=4s$ we see that

$$ \begin{array}{rcl} \Pee\left( \text{$K$ is not covered by $\bigcup_{i=1}^n B(x_i, r)$}\right)
& \leq & \Pee(B(x_i, 2s) \cap \{X_1,\dots,X_n\} = \emptyset \text{ for some $1\leq i\leq N$} ) \\
& \leq & N \cdot \exp\left[ - \ln n \cdot  \nu_{\min} \cdot \pi_d \cdot (\delta_2/4)^d \cdot \delta_3^d \right] \\
& = & o(1), 
\end{array} $$

\noindent
where the last line holds provided we chose $\delta_2 > 4^d /(\nu_{\min} \cdot \pi_d \cdot \delta_3^d)$.
\end{proofof}

\begin{lemma}\label{lem:motor}
Let $K \subseteq \eR^d$ be a convex body. There exists a constant $\delta_4=\delta_4(K)$ such that,
for every $0<r\leq\diam(K)$, there exists a finite family of sets 
$A_1,\dots, A_N \subseteq K$ such that 
\begin{enumerate}
 \item\label{itm:motor1} Each $A_i$ is open in the relative topology of $K$, and;
 \item\label{itm:motor2} For every $S \subseteq K$ with $\diam(S) \leq r$ there is a
 $1\leq i\leq N$ such that $S \subseteq A_i$, and;
 \item\label{itm:motor3} For every $I \subseteq [N]$, if $\bigcap_{i\in I} A_i$ is nonempty then it is contractible, and;
 \item\label{itm:motor4} For every $I \subseteq [N]$, if $\bigcap_{i\in I} A_i$ is nonempty then it contains 
 a ball of radius $\delta_4\cdot r$, and;
 \item\label{itm:motor5} For every $I \subseteq [N]$, if $x, y \in \bigcap_{i\in I} A_i$ 
 satisfy $\norm{x-y} \geq r$ then $W(x,y,r) \cap \left( \bigcap_{i\in I} A_i \right)$ contains a ball of radius $\delta_4\cdot r$.
\end{enumerate}
\end{lemma}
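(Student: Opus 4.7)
The plan is to construct $A_1,\dots,A_N$ from a cubic grid of side $r$ in $\eR^d$: for each lattice cube $C_j := x_j + [0,r]^d$ (with $x_j \in r\Zed^d$) that meets $K$, let $C_j^+$ be the open axis-aligned cube of side $2r$ having the same centre as $C_j$, and set $A_j := C_j^+ \cap K$. I expect properties (i) and (iii) to be immediate: each $A_j$ is the intersection of an open convex set with $K$, hence relatively open, and any finite intersection of such sets is convex and (when nonempty) contractible. For (ii), any $S\subseteq K$ with $\diam(S)\leq r$ is contained in an axis-aligned cube of side $r$, and by the grid geometry that cube sits inside some $C_j^+$.

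The real content is in (iv) and (v). The key observation will be that a nonempty intersection has the form $\bigcap_{i\in I} A_i = B_I \cap K$, where $B_I := \bigcap_{i\in I} C_i^+$ is an axis-aligned open box whose side-lengths all lie in $[r,2r]$; in particular $\diam(B_I)\leq 2\sqrt{d}\,r$ and $B_I$ contains a ball of radius $r/2$ about its geometric centre. Since $K$ is a convex body, $\lambda_K := \diam(K)/\inr(K)$ is finite and Lemma~\ref{lem:inr} applies to $K$ at every scale up to $\diam(K)$. For (iv) I would locate a point $p\in B_I\cap K$ sitting at distance at least a $K$-dependent constant times $r$ from $\partial B_I$, and then invoke Lemma~\ref{lem:inr} at $p$ with radius of order $r$ to produce a ball of radius $\delta_4 r$ inside $B_I\cap K$. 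For (v), I plan first to apply Lemma~\ref{lem:Wxyr} with $\lambda = 2\sqrt{d}$ (permissible since $r\leq\|x-y\|\leq\diam(B_I)\leq 2\sqrt{d}\,r$) to obtain a ball $B(z,\delta_1 r)\subseteq W(x,y,r)$ with $z$ on the segment $[x,y]\subseteq \bigcap_{i\in I} A_i$, and then to apply Lemma~\ref{lem:inr} a second time to refine this to a ball of radius $\delta_3\delta_1 r$ inside $B(z,\delta_1 r)\cap K \subseteq W(x,y,r)\cap \bigcap_{i\in I} A_i$.

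The hard part will be producing the ``deep'' point $p$ in the argument for (iv): when $B_I$ straddles $\partial K$ so that $B_I\cap K$ is a thin sliver attached to $\partial K$, no point of $B_I\cap K$ need lie at macroscopic distance from $\partial B_I$, and the ball produced by Lemma~\ref{lem:inr} can then escape $B_I$. I expect to handle this by refining the construction in that regime---for instance, by enlarging each $A_j$ to $\conv\bigl((C_j^+\cap K)\cup B(o,\rho)\bigr)$ for a fixed interior anchor $B(o,\rho)\subseteq K$, so that every nonempty intersection automatically contains $B(o,\rho)$---and by treating the regime where $r$ is comparable to $\diam(K)$ separately with the trivial family $N=1$, $A_1 = K$. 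Arranging this modification so that (ii), (iii) and (v) continue to hold is where I expect the technical weight of the proof to lie.
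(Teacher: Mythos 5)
Your cubic-grid construction handles (i)--(iii) cleanly, and, granted (iv), your plan for (v) is essentially the paper's: Lemma~\ref{lem:Wxyr} to locate a ball in $W(x,y,r)$ centred on the segment $[x,y]$, then Lemma~\ref{lem:inr} to push it into $K$. (One small correction: that second application of Lemma~\ref{lem:inr} must be made with $X=\bigcap_{i\in I}A_i$ rather than with $X=K$, since a ball in $B(z,\delta_1 r)\cap K$ can still escape the box $B_I$; this works once (iv) supplies the inradius bound $\inr\bigl(\bigcap_{i\in I}A_i\bigr)\geq\delta_4 r$.) You have also correctly identified the genuine obstacle: when $B_I$ meets $K$ only in a thin sliver near $\partial K$, no point of $B_I\cap K$ is at distance $\Omega(r)$ from $\partial B_I$, so the ball supplied by Lemma~\ref{lem:inr} can leave $B_I$ and (iv) fails.

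Your proposed repair, however, does not work, and the failure is structural. Replacing $A_j$ by $\conv\bigl((C_j^+\cap K)\cup B(o,\rho)\bigr)$ with a \emph{fixed} anchor ball $B(o,\rho)\subseteq K$ destroys exactly the property you cannot afford to lose, namely $\diam\bigl(\bigcap_{i\in I}A_i\bigr)=O(r)$. With a fixed anchor, every nonempty intersection contains both $B(o,\rho)$ and points of some cube $C_j^+$ that may lie at distance $\Theta(\diam(K))$ from $o$. Hence one can have $x,y\in\bigcap_{i\in I}A_i$ with $\norm{x-y}/r\to\infty$ as $r\to 0$, so Lemma~\ref{lem:Wxyr} no longer applies with any $K$-dependent $\lambda$; indeed $W(x,y,r)$ is too thin to contain a ball of radius $\Omega(r)$ once $\norm{x-y}\gg r$, so (v) collapses. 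Shrinking $\rho$ to scale $r$ does not help, because the convex hull with a far-away anchor is still macroscopic.

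The paper resolves the sliver problem with a \emph{local}, scale-$r$ perturbation rather than a global anchor. It takes $x_1,\dots,x_N\in K$ with $B(x_1,r),\dots,B(x_N,r)$ disjoint and $K\subseteq\bigcup_i B(x_i,2r)$, and sets $A_i:=B(x_i,s_i)\cap K$, where the radii $s_i\in[3r,4r]$ are chosen by an iterative bumping procedure: starting from $s_i\equiv 3r$, whenever some nonempty $K\cap\bigcap_{i\in I}B(x_i,s_i)$ has no point $x\in K$ with $B(x,\eps r)\subseteq\bigcap_{i\in I}B(x_i,s_i)$, each $s_i$ with $i\in I$ is increased by $\eps r$. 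A bounded-degree count ($m_i\leq 9^d$ balls of radius $4r$ can meet a given one) shows that, with $\eps:=\tfrac{1}{100}\,2^{-9^d}$, the procedure is applied at most $2^{9^d}$ times per index and terminates with all $s_i\leq 4r$. After termination, every nonempty intersection of the $B(x_i,s_i)$ automatically contains a ball $B(x,\eps r)$ with $x\in K$; applying Lemma~\ref{lem:inr} to $K$ inside $B(x,\eps r)$ produces the ball required by (iv) without ever leaving $\bigcap_{i\in I}A_i$, and (v) follows along the lines you sketched. The idea missing from your proposal is precisely this radius-bumping step that manufactures the ``deep'' point while keeping every diameter at scale $r$.
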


\begin{proof}
We let $x_1, \dots, x_N \in K$ be such that $B(x_1, r), \dots, B(x_N,r)$ are disjoint and
$K \subseteq \bigcup_{i=1}^N B(x_i, 2r)$.
We are going to set $A_i := B(x_i,s_i) \cap K$ where 
the radii $3r \leq s_i \leq 4r$ will be determined via an iterative procedure.
Initially we set $s_1 = \dots = s_N = 3r$.
We let $\eps>0$ be a parameter, to be chosen more precisely later on.

If there is a set of indices $I \subseteq [N]$ such that
$K \cap \bigcap_{i\in I} B(x_i,s_i)$ is nonempty, but there is no
$x \in K \cap \bigcap_{i\in I} B(x_i,s_i)$ such that $B(x,\eps r) \subseteq \bigcap_{i\in I} B(x_i,s_i)$,
then we increase $s_i$ by $\eps r$ for each $i \in I$.
(Note that this will ensure that there is an $x \in K$ with $B(x,\eps r) \subseteq \bigcap_{i\in I} B(x_i,s_i)$.)
We keep repeating this step until this is no longer possible.

Since the operation is applied at most once to each $I \subseteq [N]$ the procedure finishes after a finite number 
of operations.
It may however not be obvious that there is a choice of $\eps>0$ for which this procedure will finish 
in a situation where $s_1,\dots,s_N \leq 4r$.
To see this we set

$$ I_i := \{ j : B(x_i,4r) \cap B(x_j,4r) \neq \emptyset \}, \quad m_i := |I_i|. $$

\noindent
If $B(x_i,4r) \cap B(x_j,4r) \neq \emptyset$ then $B(x_j,r) \subseteq B(x_i,9r)$.  
As the balls $B(x_1,r), \dots, B(x_N,r)$ are disjoint this gives

$$ \pi_d \cdot (9r)^d = \vol( B(x_i, 9r ) ) \geq \sum_{j\in I_i} \vol( B(x_j,r) ) = m_i \cdot 
\pi_d\cdot r^d. $$

\noindent
It follows that

$$ m_i \leq 9^d. $$

\noindent
The number of sets of indices $I$ with $i \in I$ and $\bigcap_{j\in I} B(x_j,4r) \neq \emptyset$ is therefore at most 
$2^{m_i} \leq 2^{9^d}$. 
Having chosen $\eps := \frac{1}{100} \cdot 2^{-9^d}$ we see that no radius $s_i$ will ever 
exceed $4r$.

We need to prove that, for a suitable choice of the constant $\delta_4$, the constructed 
sets $A_1 := B(x_1,s_i) \cap K, \dots, A_N := B(x_N, s_N) \cap K$
satisfy the properties claimed by the lemma. 
That~\ref{itm:motor1} holds is obvious and that~\ref{itm:motor3} 
holds follows immediately from the fact that each $A_i$, and hence also 
each nonempty intersection $\bigcap_{i\in I} A_i$, is convex.
To see that~\ref{itm:motor2} holds, let $S \subseteq K$ with $\diam(S) \leq r$ be arbitrary and 
fix $s \in S$. Since $K \subseteq B(x_1,2r) \cup \dots \cup B(x_N, 2r)$ there is 
some $i$ such that $s \in B(x_i,2r)$. But then 

$$ S \subseteq K \cap B(s,r) \subseteq K \cap B(x_i, 3r) \subseteq K \cap B(x_i, s_i ) = A_i. $$

It remains to establish~\ref{itm:motor4},~\ref{itm:motor5} for a suitable choice of the constant $\delta_4$.
We let $\delta_3=\delta_3(\diam(K)/\inr(K))$ be as provided by Lemma~\ref{lem:inr}.
If $I$ is such that $\bigcap_{i\in I} A_i = K \cap \bigcap_{i\in I} B(x_i, s_i)$ is non-empty
then by construction there exists $x \in K$ such that $B(x,\eps r) \subseteq \bigcap_{i\in I} B(x_i, s_i)$.
Applying Lemma~\ref{lem:inr} we find a ball $B(z, \delta_3 \eps r) \subseteq B(x,\eps r) \cap K
\subseteq \bigcap_{i\in I} A_i$.

Now suppose $x, y \in \bigcap_{i\in I} A_i$ with $\norm{x-y} \geq r$, and let 
$\delta_1 = \delta_1(8/\delta_3\eps)$ be as provided by Lemma~\ref{lem:Wxyr}.
(Observe that $\diam\left(\bigcap_{i\in I} A_i\right) \leq 8r$ and 
$\inr\left(\bigcap_{i\in I} A_i\right) \geq \delta_3 \eps r$.)
It follows from Lemma~\ref{lem:Wxyr} that there exists a 
ball $B(z,\delta_1 r) \subseteq W(x,y,r)$ with $z \in \conv(\{x,y\}) \subseteq \bigcap_{i\in I} A_i$.
(The inclusion holding by convexity.)
Applying Lemma~\ref{lem:inr} again, we find a ball 

$$ B(z', \delta_3 \delta_1 r ) \subseteq B(z, \delta_1 r) \cap \bigcap_{i\in I} A_i
\subseteq W(x,y,r) \cap \bigcap_{i\in I} A_i. $$

We conclude that~\ref{itm:motor4} and~\ref{itm:motor5} hold with $\delta_4 := \min(\delta_3\delta_1, \delta_3\eps )$.
\end{proof}

\begin{proofof}{Theorem~\ref{thm:contract}}
By Lemma~\ref{lem:cover}, there is a $\delta_2=\delta_2(K)$ such that, a.a.s., every ball $B(x, \delta_2\left(\ln n/n\right)^{1/d} )$
with $x\in K$ contains at least one of the random points 
$X_1,\dots, X_n$.
We set $c := \delta_2/\delta_4$ with $\delta_4=\delta_4(K)$ as provided by Lemma~\ref{lem:motor}.
Let $r \geq c\left(\ln n/n\right)^{1/d}$ be arbitrary.
Note that if $r \geq \diam(K)$ then $\Rcal(n,r)$ is the complete simplicial complex on $n$ vertices and we are trivially done.
We thus assume $r < \diam(K)$ for the remainder of the proof, and we let $A_1,\dots, A_N$ be as provided
by Lemma~\ref{lem:motor}.
By the geometric version of the nerve theorem and items~\ref{itm:motor1} and~\ref{itm:motor3} of Lemma~\ref{lem:motor}, we have 
$\Ncal( (A_i)_{i=1,\dots, N} ) \simeq K$.

Let $\Delta_i$ denote subcomplex of $\Rcal(n,r)$ induced by the points $\{X_1,\dots,X_n\} \cap A_i$ that fall in $A_i$.
By the combinatorial version of the nerve theorem, to finish the proof of Theorem~\ref{thm:contract} 
it suffices to show that, a.a.s, {\bf a)} $\bigcup_i \Delta_i = \Rcal(n,r)$,  
{\bf b)} every nonempty finite intersection $\bigcap_{i\in I} \Delta_i$ is contractible, and 
{\bf c)} $\bigcap_{i\in I} A_i \neq \emptyset$ if and only if $\bigcap_{i\in I} \Delta_i \neq \emptyset$. 
(Note that {\bf c)} in fact states that the nerves $\Ncal( (A_i)_{i=1,\dots, N} )$ and $\Ncal( (\Delta_i)_{i=1,\dots, N} )$ are identical.)

The demand {\bf a)} immediately follows from item~\ref{itm:motor2} of Lemma~\ref{lem:motor}. 
That $\bigcap_{i\in I} \Delta_i \neq \emptyset$ implies $\bigcap_{i\in I} A_i \neq \emptyset$ is immediate from the definition of
$\Delta_i$.
If $\bigcap_{i\in I} A_i \neq \emptyset$ then in fact there is a ball 

$$ B(x, \delta_2\left(\ln n/n\right)^{1/d} ) \subseteq B(x,\delta_4 r )\subseteq \bigcap_{i\in I} A_i. $$ 

\noindent
As we have seen above, a.a.s., every such disk contains at least one of the random points
$X_1,\dots, X_n$. But then this random point is a vertex of $\bigcap_{i\in I} \Delta_i$ by definition of the subcomplexes $\Delta_i$.
This establishes that {\bf c)} holds.

It remains to see why {\bf b)} holds. Suppose that $\bigcap_{i\in I} \Delta_i$ is nonempty, 
and let $X_{j_1}, \dots, X_{j_k}$ denote its vertices.
We can assume without loss of generality that the points are labelled by non-decreasing distance to $X_{j_1}$, 
i.e.~

$$\norm{X_{j_2}-X_{j_1}} \leq \dots \leq \norm{X_{j_k}-X_{j_1}}. $$

\noindent
By item~\ref{itm:motor5} of Lemma~\ref{lem:motor}, for each $2\leq \ell \leq k$, we have either 
$\norm{X_{j_\ell} - X_{j_1}} < r$ or there is a ball $B(z,\delta_4 r) \subseteq W(X_{j_\ell},X_{j_1},r) \cap 
\bigcap_{i\in I} A_i$.
By the choice of $c$, a.a.s., any such ball contains one of the random points $X_1,\dots, X_n$.
In particular, $B(z,\delta_4 r)$ contains one of $X_{j_1}, \dots, X_{j_{\ell-1}}$.
In other words, for each $2\leq \ell \leq k$, we have either 
$\norm{X_{j_\ell} - X_{j_1}} < r$ or there exists a $2\leq m(\ell) < \ell$ such that 
$X_{j_{m(\ell)}} \in W(X_{j_\ell}, X_{j_1}, r)$.
Writing $G_\ell$ for the subgraph of the random geometric graph induced by $X_{j_1}, \dots, X_{j_\ell}$, we see that 

$$ N_{G_\ell}[ X_{j_\ell} ] \subseteq N_{G_\ell}[ X_{j_{m(\ell)}} ], $$

\noindent
where we set $m(\ell) := 1$ when $\norm{X_{j_\ell}-X_{j_1}} < r$.
By repeated applications of Lemma~\ref{lem:del} we now derive

$$\bigcap_{i\in I} \Delta_i = \Kcal(G_k) \simeq \Kcal(G_{k-1}) \simeq \dots \simeq \Kcal( G_1 ). $$

\noindent
So $\bigcap_{i\in I} \Delta_i$ is indeed contractible.
\end{proofof}

\subsection{The proof of Theorem~\ref{thm:smooth}}

Much of the proof of Theorem~\ref{thm:contract} can be reused. 
The next two lemmas represent the main adaptations that need to be made.

\begin{lemma}\label{lem:inrsmooth}
Let $K \subseteq \eR^d$ be a compact, smooth $d$-manifold with boundary.
There exist constants $\delta_3=\delta_3(K), \rho_1 = \rho_1(K)>0$ such that
for every $x \in K$ and $0<r\leq \rho_1$ there is a ball $B(z,\delta_3 r) \subseteq B(x,r) \cap K$.
\end{lemma}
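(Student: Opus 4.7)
The plan is to exploit the positive reach of $\partial K$. Since $K$ is a compact smooth $d$-manifold with boundary embedded in $\eR^d$, its boundary $\partial K$ is a compact smooth hypersurface, and by a standard result from reach theory (going back to Federer) its reach $\rho_0=\rho_0(K)$ is strictly positive. The consequence I will use is the interior rolling ball property: writing $\nu(p)$ for the inward unit normal at $p\in\partial K$, the open ball $B(p+\rho_0\nu(p),\rho_0)$ is contained in $K$, and hence by nesting $B(p+t\nu(p),t)\subseteq K$ for every $0<t\le\rho_0$. I would quote this (or derive it from the tubular neighbourhood theorem applied to $\partial K$) rather than prove it from scratch.

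With this in hand, I would set $\rho_1:=\rho_0$ and split into two cases depending on $\dist(x,\partial K)$. If $\dist(x,\partial K)\ge r/2$ then $B(x,r/2)\subseteq K\cap B(x,r)$, so one may take $z=x$ and any $\delta_3\le 1/2$ works. Otherwise $\dist(x,\partial K)<r/2\le\rho_0/2$, so the nearest-point projection $p\in\partial K$ of $x$ is uniquely defined by the positive reach, and one has $x=p+s\nu(p)$ with $0\le s<r/2$. I would then set $z:=p+(r/4)\nu(p)$. The rolling ball property at scale $r/4\le\rho_0$ gives $B(z,r/4)\subseteq K$; the triangle inequality gives $\|z-x\|=|r/4-s|\le 3r/4$, so $B(z,r/4)\subseteq B(x,r)$. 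Taking the minimum over the two cases yields $\delta_3:=1/4$.

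The main obstacle is purely one of exposition: importing the correct differential-geometric fact (positive reach of $\partial K$, equivalently the interior rolling ball) in a self-contained way. Once it is accepted, the rest of the argument is an elementary two-case estimate with explicit constants, and nothing beyond the triangle inequality is needed. I expect the final statement to give $\delta_3(K)=1/4$ and $\rho_1(K)=\mathrm{reach}(\partial K)$, possibly adjusted by a harmless constant factor for cleanliness.
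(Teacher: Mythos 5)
Your proof is correct, but it takes a genuinely different route from the paper. The paper's argument is chart-based: it invokes Lemma~\ref{lem:boundedderivatives} to get, around each $x\in K$, a diffeomorphism $\varphi_x$ onto a neighbourhood of $\Haa^d$ with uniformly bounded first and second derivatives, then uses Corollary~\ref{cor:ballcor} (a bi-Lipschitz distortion bound for balls under $\varphi_x$ and $\varphi_x^{-1}$) to push a ball into the model half-space, find a slightly smaller ball inside $B(\varphi_x(x),\delta_7 r)\cap\Haa^d$, and pull it back, arriving at $\delta_3=\delta_7^2/2$ and $\rho_1=\rho_5$. You instead work intrinsically in $\eR^d$ via the positive reach of $\partial K$ and the interior rolling-ball property, splitting on whether $\dist(x,\partial K)\geq r/2$ and, in the near-boundary case, sliding a ball of radius $r/4$ inward along the normal through the foot point $p$ of $x$. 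Your two-case estimate is tight and correct (indeed $\|z-x\|=|r/4-s|\leq r/4$, even better than your stated $3r/4$), and it yields a clean explicit constant $\delta_3=1/4$ with $\rho_1=\mathrm{reach}(\partial K)$. What your approach buys is transparency and explicit constants; what the paper's approach buys is reuse — the chart machinery of Lemma~\ref{lem:boundedderivatives} and Corollary~\ref{cor:ballcor} is needed anyway for Lemma~\ref{lem:locallystarshaped}, so the paper gets Lemma~\ref{lem:inrsmooth} essentially for free from infrastructure it must build regardless, whereas your route would require importing positive-reach theory as a separate black box. Both are valid; if you wanted to adopt your version in the paper you would still need the charts elsewhere, so it would not shorten the overall exposition.
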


\begin{lemma}\label{lem:locallystarshaped}
Let $K \subseteq \eR^d$ be a compact, smooth $d$-manifold with boundary.
For every $\lambda>0$ there exists a $\rho_2 = \rho_2(K,\lambda)>0$ such that
if $x \in K$ and $B(y,r) \subseteq K$ and $\norm{x-y} \leq \lambda r$ for some $0<r<\rho_2$ then 
$\conv(\{x,y\}) \subseteq K$.
\end{lemma}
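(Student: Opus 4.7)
The plan is to first dispose of the trivial regime $\lambda\le 1$: there $\norm{x-y}\le r$ forces $\conv(\{x,y\})\subseteq\overline{B(y,r)}\subseteq K$ because $K$ is closed, and the statement holds for any $\rho_2>0$. So from now on I would assume $\lambda>1$.

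The key geometric input I would use is the positive reach of $\partial K$: because $\partial K$ is a compact $C^{\infty}$ hypersurface of $\eR^d$, the tubular neighbourhood theorem (equivalently, a uniform bound on its second fundamental form) yields a constant $\rho_0=\rho_0(K)>0$ with the following \emph{outer rolling ball property}: for every $q\in\partial K$ the open ball $B(q+\rho_0\, n(q),\rho_0)$ is disjoint from $\mathrm{int}(K)$, where $n(q)$ denotes the outward unit normal to $\partial K$ at $q$. With $\rho_0$ in hand, I would take $\rho_2:=2\rho_0/\lambda^2$.

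The rest of the proof is a contradiction argument pivoted on a single tangent ball. Assume $B(y,r)\subseteq K$, $\norm{x-y}\le\lambda r$, $0<r<\rho_2$, and yet $\conv(\{x,y\})\not\subseteq K$. Since $B(y,r)\subseteq\mathrm{int}(K)$ and $K$ is closed, the segment $[y,x]$ has a first exit point $w=y+s^*(x-y)\in\partial K$ with $s^*\in(0,1)$. Let $n=n(w)$ and $c=w+\rho_0 n$, so $B(c,\rho_0)\cap\mathrm{int}(K)=\emptyset$. Disjointness of the open balls $B(y,r)$ and $B(c,\rho_0)$ yields $\norm{y-c}\ge r+\rho_0$, and $x\in K$ combined with $B(c,\rho_0)\cap\mathrm{int}(K)=\emptyset$ yields $\norm{x-c}\ge\rho_0$ (regardless of whether $x\in\mathrm{int}(K)$ or $x\in\partial K$).

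Writing $L:=\norm{x-y}\le\lambda r$ and $\xi:=\langle x-y,n\rangle$, and using $y-w=-s^*(x-y)$ and $x-w=(1-s^*)(x-y)$, these two distance inequalities expand (after cancelling $\rho_0^2$) to
$$
s^{*2}L^2+2\rho_0 s^*\xi \;\ge\; r^2+2r\rho_0, \qquad (1-s^*)L^2 \;\ge\; 2\rho_0\xi.
$$
Multiplying the second by $s^*\ge 0$ and substituting $2\rho_0 s^*\xi\le s^*(1-s^*)L^2$ into the first gives
$$
s^* L^2 \;=\; s^{*2}L^2+s^*(1-s^*)L^2 \;\ge\; r^2+2r\rho_0 \;>\; 2r\rho_0,
$$
so $s^*>2\rho_0/(\lambda^2 r)>1$ for $r<\rho_2=2\rho_0/\lambda^2$, contradicting $s^*\in(0,1)$. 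The one step that requires outside input is the outer rolling ball assertion for smooth compact hypersurfaces, and this is the place where I expect the main obstacle to lie if one wants to keep the paper self-contained; the rest is the short algebraic manipulation above.
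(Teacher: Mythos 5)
Your proof is correct, and it takes a genuinely different route from the paper. The paper works in the charts of Lemma~\ref{lem:boundedderivatives}, writing $\psi(t)=\varphi_x((1-t)x+ty)$ and comparing it with its linearization $\hat\psi(t)=\varphi_x(x)+(D\varphi_x)_x\,t(y-x)$: a second-order Taylor bound shows $\norm{\psi(t)-\hat\psi(t)}=O(t^2r^2)$, while $B(y,r)\subseteq K$ together with Corollary~\ref{cor:ballcor} forces a ball of radius $\Theta(tr)$ around $\hat\psi(t)$ to lie in $\Haa^d$, so the quadratic error is absorbed once $r$ is small. You instead invoke the exterior rolling-ball property of a compact smooth hypersurface (a consequence of the tubular neighbourhood theorem, equivalently positive reach of $\partial K$), locate the first exit point $w$ of the segment, and derive a contradiction from the two distance inequalities $\norm{y-c}\ge r+\rho_0$ and $\norm{x-c}\ge\rho_0$ for the tangent ball centre $c=w+\rho_0 n(w)$. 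Your algebra is right: multiplying $(1-s^*)L^2\ge 2\rho_0\xi$ by $s^*$ and inserting it into $s^{*2}L^2+2\rho_0 s^*\xi\ge r^2+2r\rho_0$ yields $s^*L^2>2r\rho_0$, hence $s^*>2\rho_0/(\lambda^2 r)>1$ once $r<\rho_2=2\rho_0/\lambda^2$. Your route is shorter and more geometric; the paper's route avoids importing the rolling-ball fact and reuses machinery (Lemma~\ref{lem:boundedderivatives}, Corollary~\ref{cor:ballcor}) it needs anyway for Lemma~\ref{lem:inrsmooth}, so it is more self-contained. Two small points worth spelling out if you wrote this up: first, the step $\norm{x-c}\ge\rho_0$ when $x\in\partial K$ deserves a sentence -- $B(c,\rho_0)$ is open, and any open neighbourhood of a boundary point of the $d$-manifold $K$ meets $\mathrm{int}(K)$, so $B(c,\rho_0)\cap\partial K=\emptyset$ as well; second, the rolling-ball constant $\rho_0$ should be taken strictly below the normal injectivity radius so that the open ball $B(w+\rho_0 n(w),\rho_0)$ really misses $\partial K$ at every $w$ (this is where compactness is used).
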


Before proceeding with the proof of these two lemmas, we first explain 
how they can be used to adapt the proof of Theorem~\ref{thm:contract} to obtain a proof of Theorem~\ref{thm:smooth}.
We observe that the proof of Lemma~\ref{lem:cover} carries over verbatim if we substitute the use of Lemma~\ref{lem:inr}
with Lemma~\ref{lem:inrsmooth}.

\begin{corollary}\label{cor:coversmooth}
Suppose $K\subseteq\eR^d$ is a compact, smooth $d$-manifold with boundary. 
There is a $\delta_2 = \delta_2(K,\nu)$ such that if $r_n \geq \delta_2 \cdot \left(\frac{\ln n}{n}\right)^{1/d}$ then, a.a.s., 
$K \subseteq \bigcup_{i=1}^n B(X_i,r_n)$.
\end{corollary}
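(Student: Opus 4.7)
The strategy is to rerun the proof of Lemma~\ref{lem:cover} line by line, substituting Lemma~\ref{lem:inrsmooth} for Lemma~\ref{lem:inr} at the one place where the convexity of $K$ was actually used. Specifically, I set $r := \delta_2 \cdot (\ln n/n)^{1/d}$ with $\delta_2 = \delta_2(K,\nu)$ to be fixed later, put $s := r/4$, and greedily choose points $x_1,\dots,x_N \in K$ so that the balls $B(x_i,s)$ are pairwise disjoint while $K \subseteq \bigcup_{i=1}^N B(x_i,2s)$. This is a purely metric packing construction that makes no use of convexity whatsoever.

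In the proof of Lemma~\ref{lem:cover} the next step was to invoke Lemma~\ref{lem:inr} in order to fit a ball of radius $\delta_3 s$ inside each $B(x_i,s) \cap K$, yielding the volume lower bound $\nu(B(x_i,s)) \geq \nu_{\min} \cdot \pi_d \cdot (\delta_3 s)^d$ where $\nu_{\min} := \inf_{x\in K} f(x) > 0$. This is precisely the place where Lemma~\ref{lem:inrsmooth} now takes over: it produces the same inscribed ball, with $\delta_3 = \delta_3(K)$ the constant furnished by that lemma, provided only that $s \leq \rho_1(K)$. Since $s = \delta_2(\ln n/n)^{1/d}/4 \to 0$, this constraint is harmless for an a.a.s.\ statement -- one simply restricts attention to $n$ large enough that $s \leq \rho_1$.

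Once this single substitution is in place, the remainder of the argument is unchanged: disjointness of the packing forces $N = O(n/\ln n)$, and a union bound over the events that $B(x_i,2s) \cap \{X_1,\dots,X_n\} = \emptyset$ yields a failure probability bounded by $N \cdot \exp[-\ln n \cdot \nu_{\min} \cdot \pi_d \cdot (\delta_2/4)^d \cdot \delta_3^d]$, which is $o(1)$ provided $\delta_2$ is chosen sufficiently large in terms of $\nu_{\min}$ and $\delta_3$. Whenever every $B(x_i,2s)$ meets the sample, each point of $K$ lies within $2s$ of some $x_i$ and hence within $r=4s$ of some $X_j$, giving $K \subseteq \bigcup_{j=1}^n B(X_j,r_n)$.

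I do not anticipate any genuine obstacle in writing this out: convexity of $K$ played no role in the packing/covering construction, and its sole quantitative use in Lemma~\ref{lem:cover} came through the inradius-type estimate of Lemma~\ref{lem:inr}, for which Lemma~\ref{lem:inrsmooth} is the precise smooth-manifold analogue with the same conclusion (up to renaming of the constants and the extra mild requirement $s \leq \rho_1$, which is automatic here).
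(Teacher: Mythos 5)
Your proof is correct and follows exactly the paper's intended route: the paper simply remarks that "the proof of Lemma~\ref{lem:cover} carries over verbatim if we substitute the use of Lemma~\ref{lem:inr} with Lemma~\ref{lem:inrsmooth}," which is precisely what you carry out, and you rightly note the only new (and harmless) hypothesis $s \leq \rho_1(K)$, which holds automatically once $n$ is large.
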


With some minor changes to the statement and its proof, we obtain the following variant of Lemma~\ref{lem:motor}.

\begin{lemma}\label{lem:motorsmooth}
Let $K \subseteq \eR^d$ be a compact, smooth $d$-manifold with boundary. 
There exist constants $\delta_4=\delta_4(K), 
\delta_5=\delta_5(K), \rho_3=\rho_3(K)$ such that,
for every $0<r\leq\rho_3$, there exists a finite family of sets 
$A_1,\dots, A_N \subseteq K$ such that 
\begin{enumerate}
 \item Each $A_i$ is open in the relative topology of $K$, and;
 \item For every $S \subseteq K$ with $\diam(S) \leq r$ there is an 
 $1\leq i\leq N$ such that $S \subseteq A_i$, and;
 \item For every $I \subseteq [N]$, if $\bigcap_{i\in I} A_i$ is nonempty then it is contractible, and;
 \item For every $I \subseteq [N]$, if $\bigcap_{i\in I} A_i$ is nonempty then it contains 
 a ball of radius $\delta_4\cdot r$, and;
 \item For every $I \subseteq [N]$, if $x, y \in \bigcap_{i\in I} A_i$ 
 satisfy $\norm{x-y} \geq r$ and $B(y,(\delta_4/2)\cdot r) \subseteq \bigcap_{i\in I} A_i$ then 
 $W(x,y,r) \cap \left( \bigcap_{i\in I} A_i \right)$ contains a ball of radius $\delta_5\cdot r$.
\end{enumerate}
\end{lemma}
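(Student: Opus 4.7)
The plan is to follow the proof of Lemma~\ref{lem:motor} line by line, substituting Lemma~\ref{lem:inrsmooth} whenever Lemma~\ref{lem:inr} was applied to obtain a ball inside $B(x,r)\cap K$, and using Lemma~\ref{lem:locallystarshaped} to replace every appeal to the convexity of $K$ (which is now lost). The scale constant $\rho_3=\rho_3(K)$ will be chosen as a minimum of the scale bounds coming from those two lemmas, so that each invocation stays within its valid regime.

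I would keep the construction exactly as before: pick a maximal packing $x_1,\ldots,x_N\in K$ with $B(x_i,r)$ disjoint and $K\subseteq\bigcup_i B(x_i,2r)$, and set $A_i := B(x_i,s_i)\cap K$ with $s_i\in[3r,4r]$ determined by the same iterative inflation procedure. The packing/counting estimates that drove termination are purely Euclidean and carry over verbatim, together with the key consequence that each nonempty intersection $\bigcap_{i\in I} A_i$ contains a point $x\in K$ with $B(x,\varepsilon r)\subseteq C := \bigcap_{i\in I} B(x_i,s_i)$. Properties 1 and 2 are immediate; property 4 follows by applying Lemma~\ref{lem:inrsmooth} at $x$ to produce $B(z_0,\delta_3\varepsilon r)\subseteq B(x,\varepsilon r)\cap K$, so $\delta_4:=\delta_3\varepsilon$. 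For property 3 I would show $\bigcap_{i\in I}A_i$ is star-shaped around $z_0$: for $y$ in the intersection, $\|y-z_0\|\leq\diam(C)\leq 8r$ and $B(z_0,\delta_4 r)\subseteq K$, so Lemma~\ref{lem:locallystarshaped} with $\lambda=8/\delta_4$ yields $\conv(\{y,z_0\})\subseteq K$, and convexity of $C$ then places the segment in $C\cap K$.

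Property 5 will be the main obstacle, because $\bigcap_{i\in I} A_i$ is no longer convex and Lemma~\ref{lem:inr} cannot be directly applied to enlarge the ball from Lemma~\ref{lem:Wxyr}. Given $x,y$ with the stated hypothesis I would first use $B(y,(\delta_4/2)r)\subseteq K$ and $\|x-y\|\leq 8r$ in Lemma~\ref{lem:locallystarshaped} (with $\lambda=16/\delta_4$) to place $\conv(\{x,y\})$ inside $K$; Lemma~\ref{lem:Wxyr} with $\lambda=8$ then yields $z\in\conv(\{x,y\})\subseteq C\cap K$ with $B(z,\delta_1 r)\subseteq W(x,y,r)$. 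The key new ingredient will be a ``fat star-shapedness'' observation around $z_0$: setting $\bar z:=(1-\mu)z+\mu z_0$ for $\mu:=\delta_1/16$, every point in $B(\bar z,(\mu\delta_4/2)r)$ has the form $(1-\mu)z+\mu q$ with $q\in B(z_0,\delta_4 r/2)$, so $B(q,\delta_4 r/2)\subseteq K$ and $\|z-q\|\leq 9r$; a second use of Lemma~\ref{lem:locallystarshaped} with $\lambda=18/\delta_4$ puts $\conv(\{z,q\})$ in $K$, and convexity of $C$ puts the whole ball in $\bigcap_{i\in I}A_i$. A direct estimate $\|\bar z-z\|\leq 8\mu r=\delta_1 r/2$ shows this ball also sits inside $B(z,\delta_1 r)\subseteq W(x,y,r)$, giving property 5 with $\delta_5:=\delta_1\delta_4/32$.
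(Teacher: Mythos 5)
Your proposal is correct and follows the same overall architecture as the paper: reuse the construction of $A_i = B(x_i,s_i)\cap K$, substitute Lemma~\ref{lem:inrsmooth} for Lemma~\ref{lem:inr} where a ball in $B(x,r)\cap K$ is needed, and replace appeals to convexity of $K$ with Lemma~\ref{lem:locallystarshaped}, using that each $C := \bigcap_{i\in I} B(x_i,s_i)$ is still convex. Your treatments of properties 1--4 and your star-shapedness argument for property~3 match the paper's.

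The one place you diverge is the finish of property~5. The paper forms the convex set $X := \conv\bigl(B(y,(\delta_4/4)r)\cup\{x\}\bigr)$, uses Lemma~\ref{lem:locallystarshaped} once to show $X\subseteq\bigcap_{i\in I}A_i$, then applies Lemma~\ref{lem:inr} to $X$ at the point $z$ from Lemma~\ref{lem:Wxyr} (noting $\diam(X)\leq 8r$, $\inr(X)\geq(\delta_4/4)r$) to produce $B(z',\delta_1\delta_3 r)\subseteq X\cap W(x,y,r)$. You instead dilate $z$ toward the inradius center $z_0$ from property~4, take $\bar z=(1-\mu)z+\mu z_0$ with $\mu=\delta_1/16$, and verify by hand — via a second application of Lemma~\ref{lem:locallystarshaped} plus convexity of $C$ — that $B(\bar z,(\mu\delta_4/2)r)\subseteq\bigcap_{i\in I}A_i\cap B(z,\delta_1 r)$. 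This is essentially inlining the proof of Lemma~\ref{lem:inr} rather than invoking it as a black box, and it uses $z_0$ rather than $y$ as the ``fattening'' center. Both routes are sound and yield comparable constants; the paper's use of the auxiliary convex cone $X$ is marginally cleaner because it delegates the inradius computation, but your version has the minor virtue of exhibiting the ball explicitly.
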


\begin{proofof}{Lemma~\ref{lem:motorsmooth} assuming Lemmas~\ref{lem:inrsmooth} and~\ref{lem:locallystarshaped}}
The proof is largely the same as the proof of Lemma~\ref{lem:motor}. The construction of $A_1,\dots, A_N$ carries over unaltered, 
as do the proofs of~\ref{itm:motor1},~\ref{itm:motor2} and~\ref{itm:motor4} except that we substitute the use of 
Lemma~\ref{lem:inr} with Lemma~\ref{lem:inrsmooth} and have to assume the corresponding upper bounds on $r$.
For part~\ref{itm:motor3} we can no longer use convexity. Instead, we rely on Lemma~\ref{lem:locallystarshaped}, as follows.
Suppose $\bigcap_{i\in I} A_i$ is non-empty. We have already established that there is a ball $B(x,\delta_4 \cdot r) \subseteq 
\bigcap_{i\in I} A_i$. 
Since $\diam\left( \bigcap_{i\in I} A_i \right) \leq 8r$, under the assumption that $r \leq \rho_2(K, 8/\delta_4 )$, we know 
that for all $y \in \bigcap_{i\in I} A_i$ the line segment $\conv(\{x,y\})$ is contained in $K$.
Since, for each $i$, we have defined $A_i := B(x_i,s_i) \cap K$ and $B(x_i,s_i)$ is convex, it follows that 

$$\conv(\{x,y\}) \subseteq \bigcap_{i\in I} A_i. $$

\noindent
In other words, for each $y \in \bigcap_{i\in I} A_i$, the line segment between $y$ and $x$ 
is completely contained in $\bigcap_{i\in I} A_i$. So $\bigcap_{i\in I} A_i$ is star-shaped, and in particular contractible.

To see that~\ref{itm:motor5} holds, we suppose that $B(y,(\delta_4/2)\cdot r) \subseteq \bigcap_{i\in I} A_i$ and
$x \in \bigcap_{i\in I} A_i$.
For each $z\in B(y,(\delta_4/4)\cdot r)$ we of course have 

$$ B(z,(\delta_4/4) \cdot r ) \subseteq B(y,(\delta_4/2)\cdot r) \subseteq K. $$

\noindent
Therefore, applying Lemma~\ref{lem:locallystarshaped}, provided $r \leq \rho_2(K, 16/\delta_4)$, we have that
$\conv(\{z,x\}) \subseteq K$ for all $z\in B(y,(\delta_4/4) \cdot r )$. 
As $A_i = B(x_i,s_i) \cap K$ and balls are convex, it follows that 
in fact $\conv(\{z,x\}) \subseteq  \bigcap_{i\in I} A_i$. Hence, also

$$ X := \conv\left(B(y,(\delta_4/4) \cdot r ) \cup \{x\} \right) \subseteq \bigcap_{i\in I} A_i. $$

Applying Lemma~\ref{lem:Wxyr}, there is a $z \in \conv(\{x,y\})$ such that 
$B(z,\delta_1 r) \subseteq W(x,y,r)$ where $\delta_1 = \delta_1(8)$.
We now apply Lemma~\ref{lem:inr} to $X$. Note that $\diam(X) \leq 8r, \inr(X) \geq \delta_4/4$.
Hence, taking $\delta_3 = \delta_3(32/\delta_4)$ as provided by Lemma~\ref{lem:inr}, we have

$$ B(z,\delta_3\delta_1 r ) \subseteq X \cap W(x,y,r) \subseteq \left( \bigcap_{i\in I} A_i \right) \cap W(x,y,r). $$

\noindent
In other words, we have established~\ref{itm:motor5} for $\delta_5 := \delta_1\delta_3$.
\end{proofof}

\begin{proofof}{Theorem~\ref{thm:smooth} assuming Lemmas~\ref{lem:inrsmooth} and~\ref{lem:locallystarshaped}}
The proof is largely the same as the proof of Theorem~\ref{thm:contract}.
We assume $c_1 \cdot \left(\ln n/n\right)^{1/d} \leq r \leq c_2$ where the choice of the constants 
will be determined in the course of the proof.
Having chosen $c_2$ appropriately we can apply Lemma~\ref{lem:motorsmooth} to obtain $A_1,\dots, A_N$.
We again let $\Delta_i$ denote subcomplex of $\Rcal(n,r)$ induced by the points $\{X_1,\dots,X_n\} \cap A_i$.
It again suffices to show that, a.a.s, {\bf a)} $\bigcup_i \Delta_i = \Rcal(n,r)$,  
{\bf b)} every nonempty finite intersection $\bigcap_{i\in I} \Delta_i$ is contractible, and 
{\bf c)} $\bigcap_{i\in I} A_i \neq \emptyset$ if and only if $\bigcap_{i\in I} \Delta_i \neq \emptyset$. 

The demand {\bf a)} immediately follows from item~\ref{itm:motor2} of Lemma~\ref{lem:motorsmooth}, and 
that {\bf c)} holds a.a.s.~follows in the same way as in the proof of Theorem~\ref{thm:contract}
(assuming the constant $c_1$ was chosen sufficiently large) except that we use Corollary~\ref{cor:coversmooth}
in place of Lemma~\ref{lem:cover}.

The proof of {\bf b)} needs slightly more adaptation. Suppose again that for some $I \subseteq [N]$ the 
intersection $\bigcap_{i\in I} \Delta_i$ is nonempty and let $X_{j_1}, \dots, X_{j_k}$ denote its vertices.
Since $\bigcap_{i\in I} A_i \supseteq B(z,\delta_4 r)$ for some $z$, having chosen $c_1$ appropriately
large, we can assume without loss of generality that $X_{j_1} \in B(z,(\delta_4/2) r)$.
We also assume, without loss of generality, that the points $X_{j_1}, \dots, X_{j_k}$ are 
labelled by non-decreasing distance to $X_{j_1}$.

That 

$$ X_{j_1} \in B(z,(\delta_4/2) r) \subseteq B(z,\delta_4 r) \subseteq \bigcap_{i\in I} A_i, $$ 

\noindent
of course implies that 

$$ B(X_{j_1}, (\delta_4/2) r ) \subseteq \bigcap_{i\in I} A_i. $$

\noindent
Applying part~\ref{itm:motor5} of Lemma~\ref{lem:motorsmooth}, for each $2\leq \ell \leq k$
we have that either $\norm{X_{j_\ell}-X_{j_1}} \leq r$ or 
$W( X_{j_\ell}, X_{j_1}, r ) \cap \bigcap_{i\in I} A_i$ contains a ball of radius $\delta_5 r$.
Having chosen $c_1$ sufficiently large, a.a.s., any such ball contains one of the 
random points $X_1,\dots, X_n$. 
In other words, for each $2\leq \ell \leq k$, we have either 
$\norm{X_{j_\ell} - X_{j_1}} < r$ or there is an $2\leq m(\ell) < \ell$ such that 
$X_{j_{m(\ell)}} \in W(X_{j_\ell}, X_{j_1}, r)$.
We can thus conclude {\bf b)} holds in the same way we did in the proof of Theorem~\ref{thm:contract}.
\end{proofof}

It remains to prove Lemmas~\ref{lem:inrsmooth} and~\ref{lem:locallystarshaped}.
That $K$ is a smooth $d$-manifold with boundary means that 
for every $x \in K$ there are open sets $O, U \subseteq \eR^d$ with $x\in O$
and a diffeomorphism (a smooth bijection whose inverse is also smooth) $\varphi : O \to U$
such that $\varphi[ O \cap K ] = U \cap \Haa^d$, where $\Haa^d := [0,\infty) \times \eR^{d-1}$.
If $K$ is also compact we have in addition:

\begin{lemma}\label{lem:boundedderivatives}
Let $K \subseteq \eR^d$ be a compact, smooth $d$-manifold with boundary.
There exist $\rho_4 = \rho_4(K) > 0$ and $\delta_6=\delta_6(K)$ such that, for every $x\in K$ there
exist open sets $O_x, U_x \subseteq \eR^d$ and a diffeomorphism $\varphi_x : O_x \to U_x$, such that 
\begin{enumerate}
 \item $\varphi_x[O_x \cap K] = U_x \cap \Haa^d$, $B(x,\rho_4) \subseteq O_x$, $B(\varphi_x(x), \rho_4 ) \subseteq U_x$, and;
 \item The absolute values of the partial derivatives of $\varphi_x$ of order one and two are bounded by $\delta_6$ on $O_x$, and;
 \item The absolute values of the partial derivatives of $\varphi_x^{-1}$ of order one and two are bounded by $\delta_6$ on $U_x$.
\end{enumerate}
\end{lemma}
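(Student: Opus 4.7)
The plan is to prove the lemma by a direct compactness argument. I will start by invoking the definition of a smooth $d$-manifold with boundary: for every $y \in K$ there is a diffeomorphism $\tilde\varphi_y : \tilde O_y \to \tilde U_y$ between open subsets of $\eR^d$ with $y \in \tilde O_y$ and $\tilde\varphi_y[\tilde O_y \cap K] = \tilde U_y \cap \Haa^d$. These charts are candidates for the $\varphi_x$ we want to construct, but they are not yet uniform in $x$, and that is exactly what I will fix using the compactness of $K$.

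For each such $y$ I would first pick $r_y > 0$ so that the closed ball $\overline{B(y, 2r_y)}$ lies inside $\tilde O_y$ (possible since $\tilde O_y$ is open). All partial derivatives of $\tilde\varphi_y$ of orders $1$ and $2$ are continuous on $\tilde O_y$ and hence bounded by some $M_y$ on the compact set $\overline{B(y, 2r_y)}$. Next I would note that $\tilde K_y := \tilde\varphi_y(\overline{B(y, 2r_y)})$ is a compact subset of the open set $\tilde U_y$, so there is $\eta_y > 0$ such that the closed $\eta_y$-neighbourhood of $\tilde K_y$ still lies inside $\tilde U_y$, and on this neighbourhood the partial derivatives of $\tilde\varphi_y^{-1}$ up to order $2$ are similarly bounded by some $M_y'$, which I can take $\geq 1$.

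The family $\{B(y, r_y) : y \in K\}$ is an open cover of $K$. By compactness it has a finite subcover $B(y_1,r_{y_1}), \dots, B(y_k,r_{y_k})$, and I would define
\[
M := \max_{1\le i\le k} M_{y_i}, \quad M' := \max_{1\le i\le k} M'_{y_i}, \quad \eta := \min_{1\le i\le k} \eta_{y_i}, \quad r := \min_{1\le i\le k} r_{y_i},
\]
and then set $\delta_6 := \max(M, M')$ and $\rho_4 := \min(r/M', \eta)$. For any $x \in K$ I would choose some $i$ with $x \in B(y_i, r_{y_i})$ and take $O_x := B(x, r_{y_i})$, $\varphi_x := \tilde\varphi_{y_i}|_{O_x}$, and $U_x := \varphi_x(O_x)$. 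Since $B(x, r_{y_i}) \subseteq B(y_i, 2 r_{y_i}) \subseteq \tilde O_{y_i}$ this is a diffeomorphism onto its image, and the derivative bounds on $O_x$ and $U_x$ follow immediately because $O_x \subseteq \overline{B(y_i, 2 r_{y_i})}$ and $U_x \subseteq \tilde K_{y_i}$.

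The main obstacle I foresee is verifying the image-side neighbourhood condition $B(\varphi_x(x), \rho_4) \subseteq U_x$; the source-side condition $B(x,\rho_4) \subseteq O_x$ is immediate from $\rho_4 \leq r/M' \leq r_{y_i}$. For the image side I would argue: $\varphi_x(x) \in \tilde K_{y_i}$, so $B(\varphi_x(x), \eta) \subseteq \tilde U_{y_i}$, and on this ball $\tilde\varphi_{y_i}^{-1}$ is $M'$-Lipschitz by the first-order derivative bound together with the mean value theorem. Consequently $\tilde\varphi_{y_i}^{-1}$ maps $B(\varphi_x(x),\rho_4)$ into $B(x, M'\rho_4) \subseteq B(x, r_{y_i}) = O_x$, which gives $B(\varphi_x(x),\rho_4) \subseteq \tilde\varphi_{y_i}(O_x) = U_x$ as required. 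This closes the proof.
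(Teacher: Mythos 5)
Your proof is correct in structure and takes essentially the same compactness route as the paper: you restrict each chart to a compact set where the derivatives of the chart and its inverse are uniformly bounded, pass to a finite subcover, and take minima/maxima of the parameters. The paper's construction differs slightly at the technical level: it covers $K$ by the sets $A_x := B(x, s_x/2) \cap \psi_x^{-1}[B(\psi_x(x), s_x/2)]$, which simultaneously constrain the source and image of each chart, so that both $B(x,\rho_4)\subseteq O_x$ and $B(\varphi_x(x),\rho_4)\subseteq U_x$ follow directly from the triangle inequality, whereas you deduce the image-side inclusion from a Lipschitz estimate. Both approaches work; yours leans a bit more heavily on the mean value inequality but produces slightly smaller $O_x,U_x$, which is harmless.

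One small quantitative slip: bounding all first-order partials of $\tilde\varphi_{y_i}^{-1}$ by $M'$ makes the map $C(d)\,M'$-Lipschitz (e.g.\ $C(d) = d$ via the Frobenius/operator norm bound, or $\sqrt d$ if one argues componentwise as the paper's Corollary~\ref{cor:ballcor} does), not $M'$-Lipschitz. With $\rho_4 := \min(r/M',\eta)$ your chain $M'\rho_4 \le r \le r_{y_i}$ can fail by that dimensional factor. The fix is purely cosmetic — set $\rho_4 := \min\bigl(r/(d\,M'), \eta\bigr)$ — and does not affect the rest of the argument.
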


\begin{proof}
For each $x \in K$ there are open sets $V_x, W_x \subseteq \eR^d$ with $x \in V_x$ and a diffeomorphism 
$\psi_x : V_x \to W_x$ such that $\psi_x[V_x\cap K] = W_x \cap \Haa^d$.
We can assume without loss of generality that, for each $x \in K$, there exists a $c_x < \infty$ such that 
all partial derivatives of $\psi_x, \psi_x^{-1}$ of orders up to two are 
bounded in absolute value by $c_x$.
(Switching to a smaller open subset $V_x' \subseteq V_x$ if needed.)

For each $x \in K$ there exists an $s_x > 0$ such that $B(x,s_x) \subseteq V_x$ and $B(\psi_x(x), s_x ) \subseteq W_x$.
Let us write 

$$ A_x := B(x,s_x/2) \cap \psi_x^{-1}[ B(\psi_x(x), s_x/2 ) ]. $$

Since the sets $A_x$ are open and $K$ is compact, there are $x_1,\dots,x_N$ such that 
$K \subseteq \bigcup_{i=1}^N A_{x_i}$.
We set 

$$ \rho_4 := \frac12 \cdot \left( \min_{i=1,\dots,N} s_{x_i} \right), \quad \quad c := \max_{i=1,\dots,N} c_{x_i}, $$ 

\noindent
and for each $x \in K$ we fix an $i$ such that $x \in A_{x_i}$ and set:

$$ O_x := V_{x_i}, \quad U_x := W_{x_i}, \quad \varphi_x := \psi_{x_i}. $$

Observe that $x \in A_{x_i}$ implies that 

$$B(x, \rho_4 ) \subseteq B(x_i, 2\rho_4) \subseteq B(x_i, s_x ) \subseteq V_{x_i} = O_x, $$

\noindent
and $\psi_{x_i}(x) \in B(\psi_{x_i}(x_i), s_{x_i}/2)$ so that also

$$ B(\varphi_x(x), \rho_4 ) = B( \psi_{x_i}(x), \rho_4 ) \subseteq 
B(\psi_{x_i}(x_i), 2\rho_4) \subseteq B(\psi_{x_i}(x_i), s_{x_i}) \subseteq W_{x_i} = U_x. $$

\end{proof}

\begin{corollary}\label{cor:ballcor}
Let $K \subseteq \eR^d$ be a compact, smooth $d$-manifold with boundary.
There exist $\rho_5=\rho_5(K), \delta_7=\delta_7(K)>0$ such that, for every $x\in K$ and $0<r<\rho_5$:
\begin{enumerate}
 \item\label{itm:ballcor1} For every $y \in B(x,\rho_5)$ we have $B( \varphi_x(y), \delta_7 r ) \subseteq \varphi_x[ B(y,r) ]$, and;
 \item\label{itm:ballcor2} For every $y \in B(\varphi_x(x), \rho_5)$ we have $B(\varphi_x^{-1}(y), \delta_7 r ) \subseteq \varphi_x^{-1}[ B(y,r) ]$.
\end{enumerate}
where $\varphi_x$ is as in Lemma~\ref{lem:boundedderivatives}.
\end{corollary}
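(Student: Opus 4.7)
The plan is to convert the inclusion claims into ball comparisons by using the fact that the derivative bounds from Lemma~\ref{lem:boundedderivatives} make $\varphi_x$ and $\varphi_x^{-1}$ Lipschitz on the relevant convex domains. Everything after that is bookkeeping to ensure the balls in play never leave the domains $O_x$ and $U_x$ where those derivative bounds hold.

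First, I would observe that since the first-order partial derivatives of $\varphi_x$ are bounded in absolute value by $\delta_6$ on $O_x$, the mean value inequality applied coordinate-wise yields that $\varphi_x$ is $L$-Lipschitz on every convex subset of $O_x$, where $L := \delta_6\sqrt{d}$. The same bound makes $\varphi_x^{-1}$ $L$-Lipschitz on every convex subset of $U_x$.

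Next, I would fix $\delta_7 := 1/L$ and $\rho_5 := \rho_4/(2(1+L+\delta_7))$ (the precise numerical choice is not important; what matters is that $L\delta_7\le 1$ and that $\rho_5(1+L+\delta_7)\le \rho_4$). With these choices I would verify part~\ref{itm:ballcor1} as follows. Let $y\in B(x,\rho_5)$ and $z\in B(\varphi_x(y),\delta_7 r)$, with $0<r<\rho_5$. Using that $\varphi_x$ is $L$-Lipschitz on $B(x,\rho_4)\subseteq O_x$, I get $\norm{\varphi_x(y)-\varphi_x(x)}\le L\rho_5$, so
$$\norm{z-\varphi_x(x)} \le \norm{z-\varphi_x(y)} + \norm{\varphi_x(y)-\varphi_x(x)} < \delta_7\rho_5 + L\rho_5 \le \rho_4,$$
which places both $z$ and $\varphi_x(y)$ in $B(\varphi_x(x),\rho_4)\subseteq U_x$. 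Since this ball is convex, the Lipschitz estimate for $\varphi_x^{-1}$ applies along the segment between these two points and yields
$$\norm{\varphi_x^{-1}(z)-y} = \norm{\varphi_x^{-1}(z)-\varphi_x^{-1}(\varphi_x(y))} \le L\norm{z-\varphi_x(y)} < L\delta_7 r \le r.$$
Finally I would check that $B(y,r)\subseteq O_x$ (since $\norm{w-x}<\rho_5+r<2\rho_5\le \rho_4$ for all $w\in B(y,r)$), so that $\varphi_x^{-1}(z)\in B(y,r)\subseteq O_x$ and hence $z=\varphi_x(\varphi_x^{-1}(z))\in\varphi_x[B(y,r)]$, as desired.

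Part~\ref{itm:ballcor2} would follow by the completely symmetric argument with the roles of $\varphi_x$ and $\varphi_x^{-1}$ swapped; the same inequalities $L\delta_7\le 1$ and $\rho_5(1+L+\delta_7)\le \rho_4$ guarantee that the relevant line segments lie inside $B(x,\rho_4)\subseteq O_x$ and that the image balls stay inside $U_x$. The only real obstacle is this domain bookkeeping; the analytic content is just ``bounded first derivatives give Lipschitz on convex subsets,'' and once the domain containments are verified, both inclusions are immediate.
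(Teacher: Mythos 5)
Your proof is correct and follows essentially the same route as the paper: both deduce Lipschitz bounds for $\varphi_x$ and $\varphi_x^{-1}$ from the derivative bounds of Lemma~\ref{lem:boundedderivatives} (via the mean value inequality) and then translate these into the desired ball inclusions. You are merely more explicit about the domain containments (taking a somewhat smaller $\rho_5$), which the paper leaves implicit with $\rho_5 := \rho_4/2$; both choices are valid.
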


\begin{proof}
It follows from Lemma~\ref{lem:boundedderivatives}, using for instance Theorem 9.19 of~\cite{Rudin76}, that 
for every $x\in K$ and $y,z \in B(x,r_0)$ and $y',z' \in B(\varphi_x(x), r_0)$ we have 

$$ \norm{\varphi_x(y) - \varphi_x(z)} \leq \delta_6\sqrt{d} \cdot \norm{y-z}, \quad \quad 
\norm{y'-z'} \leq \delta_6\sqrt{d} \cdot \norm{\varphi_x^{-1}(y') - \varphi_x^{-1}(z')}. $$

\noindent
Hence

$$ \varphi_x^{-1}[ B(\varphi_x(y), \delta_7 r ) ] \subseteq 
B( \varphi_x^{-1}(\varphi_x(y)), \delta_6\sqrt{d} \cdot \delta_7 r ) = B(y,\delta_6\sqrt{d} \cdot \delta_7 r) \subseteq B(y,r), $$

\noindent 
the first inclusion holding provided $r, \norm{y-x} < \rho_5 := \rho_4 / 2$ and
the last inclusion holding provided $\delta_7 < \min(1/\delta_6\sqrt{d}, 1)$.
In other words, with this choice of $\delta_7, \rho_5$ we have:

$$ B(\varphi_x(y), \delta_7 r ) \subseteq \varphi_x[ B(y,r) ], $$

\noindent
establishing~\ref{itm:ballcor1}.
The proof of~\ref{itm:ballcor2} is completely analogous.
\end{proof}

\begin{proofof}{Lemma~\ref{lem:inrsmooth}}
Let $x \in K$ be arbitrary and $r < \rho_5(K)$ with $\rho_5$ as provided by Corollary~\ref{cor:ballcor}.
Applying Corollary~\ref{cor:ballcor}, we have

$$ \varphi_x[ B(x,r) ] \supseteq B( \varphi_x(x), \delta_7r ). $$

\noindent
Since $\varphi_x(x) \in \Haa^d$, there is a ball 

$$ B( z, (\delta_7/2) \cdot r) \subseteq B( \varphi_x(x), \delta_7r ) \cap \Haa^d, $$

\noindent
of radius $(\delta_7/2) \cdot r$ contained in $B( \varphi_x(x), \delta_7r ) \cap \Haa^d$.
Applying Corollary~\ref{cor:ballcor} once again, we have 

$$ B(x,r) \cap K \supseteq \varphi_x^{-1}[ B( z, (\delta_7/2)\cdot r) ]  \supseteq B( \varphi_x^{-1}(z), (\delta_7^2/2)\cdot r ). $$

\noindent
We can conclude Lemma~\ref{lem:inrsmooth} holds with $\delta_3 := \delta_7^2/2$ and $\rho_1 := \rho_5$.
\end{proofof}

\begin{proofof}{Lemma~\ref{lem:locallystarshaped}}
We assume $x \in K, B(y,r) \subseteq K$ and $\norm{x-y} \leq \lambda r$ with $0< r \leq \rho_2$ where the 
constant $\rho_2$ will determined in the course of the proof.
We need to show that for every $t \in [0,1]$ the point $(1-t)x + ty \in K$.
Or, equivalently, $\varphi_x( (1-t)x + ty ) \in \Haa^d$ 
with $\varphi_x$ as provided by Lemma~\ref{lem:boundedderivatives} (assuming we have chosen
$\rho_2$ so that $\lambda \rho_2 < \rho_4$.)
For notational convenience we will write 

$$\psi(t) := \varphi_x( (1-t)x + ty ), \quad \quad 
\hat{\psi}(t) := \varphi_x(x) + (D\varphi_x)_x \cdot t(y-x), $$

\noindent
where $(D\varphi_x)_z$ denotes the matrix of first derivatives of
$\varphi_x$ evaluated at $z$.
By the multivariate Taylor theorem:

\begin{equation}\label{eq:hatdist} 
\norm{ \psi(t) - \hat{\psi}(t) }  
\leq \alpha \cdot \delta_6  \cdot t^2 \cdot \norm{y-x}^2 \leq \alpha \cdot \delta_6 \cdot \lambda^2 \cdot t^2 \cdot r^2, 
\end{equation}

\noindent
where $\delta_6$ is as given by Lemma~\ref{lem:boundedderivatives} and $\alpha=\alpha(d)$ is a constant that depends only
on the dimension $d$. 
By Corollary~\ref{cor:ballcor} we have 

$$ B(\psi(1), \delta_7 r ) = B( \varphi_x(y), \delta_7 r ) \subseteq \varphi_x[ B(y,r) ] \subseteq \Haa^d. $$

Assuming $r < \delta_7 / (2 \alpha \delta_6 \lambda^2)$, we have 
$\norm{ \psi(1) - \hat\psi(1) } < (\delta_7/2) \cdot r$ and hence

$$ B( \hat\psi(1), (\delta_7/2) \cdot r ) \subseteq \Haa^d. $$

Since also $\psi(0) = \varphi_x(x) \in \Haa^d$, we have 

$$ \conv( \{\psi(0) \} \cup B( \hat\psi(1), (\delta_7/2)\cdot r ) ) \subseteq \Haa^d. $$

In particular

$$ \{ (1-t)\psi(0) + t z : z \in B( \hat\psi(1), (\delta_7/2)\cdot r ) \} 
= B( \hat\psi(t), (\delta_7/2) \cdot t  r ) \subseteq \Haa^d. $$

By~\eqref{eq:hatdist} and $r \leq \rho_2$, having chosen the constant $\rho_2$ sufficiently small, we have
$\psi(t) \in B( \hat\psi(t), (\delta_7/2) \cdot t r)$.
In other words, we have established that $\varphi( (1-t) x + t y ) \in \Haa^d$ as required, for 
a suitable choice of the constant $\rho_2(K,\lambda)$.
\end{proofof}

\section{Discussion}

\subsection{A connection with the game of cops and robbers}

The game of cops and robbers is a two-player game played on a graph.
There are two players, a cop and a robber, each located on one of the vertices of the graph. 
Before the game begins, the cop chooses his starting vertex and after that the robber chooses his (he is allowed to 
take into account where the cop is when choosing his starting position). 
In each turn, the cop either stays put or moves to a vertex adjacent to a vertex he is currently on. 
After the cop has made his move the robber does the same, and the next turn starts.
The goal of the cop is to capture the robber, i.e.~to be located on the same vertex as the robber. 
If a graph $G$ is such that the cop has a strategy so that no matter how the robber plays, 
the cop is guaranteed to achieve his goal in a finite number of moves, then $G$ is cop-win.
Aigner and Fromme~\cite{AignerFromme84}, and independently Quilliot~\cite{quilliot}, have shown that a graph is cop-win if and only if it 
can be reduced to a single vertex via a sequence of deletions as in Lemma~\ref{lem:del}. Thus:

\begin{corollary}\label{cor:cop}
If $G$ is cop-win then $\Kcal(G)$ is contractible. 
\end{corollary}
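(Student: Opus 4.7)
The plan is to combine Lemma~\ref{lem:del} with the Aigner--Fromme/Quilliot characterization stated just before the corollary. That characterization says that $G$ is cop-win if and only if there is an ordering $v_1,\dots,v_n$ of its vertices such that, setting $G_0 := G$ and $G_i := G_{i-1}\setminus v_i$, for each $1\leq i\leq n-1$ there is a vertex $w_i \in V(G_{i-1})\setminus\{v_i\}$ with $N_{G_{i-1}}[v_i] \subseteq N_{G_{i-1}}[w_i]$, and $G_{n-1}$ consists of a single vertex. (A vertex $v_i$ satisfying such a domination condition is sometimes called a \emph{corner} of $G_{i-1}$.)

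Given such an ordering, I would apply Lemma~\ref{lem:del} to each pair $(v_i,w_i)$ in $G_{i-1}$ in turn. The hypotheses are exactly what Lemma~\ref{lem:del} requires, so we obtain a chain of homotopy equivalences
\[ \Kcal(G) = \Kcal(G_0) \simeq \Kcal(G_1) \simeq \dots \simeq \Kcal(G_{n-1}). \]
Since $G_{n-1}$ is a single vertex, $\Kcal(G_{n-1})$ is a point and hence contractible. Because homotopy equivalence is transitive and contractibility is invariant under homotopy equivalence, it follows that $\Kcal(G)$ is contractible as well.

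There is really no obstacle here beyond invoking the two cited results in the correct order; the proof is essentially a two-line deduction. The only thing to be slightly careful about is that Lemma~\ref{lem:del} is applied to the current graph $G_{i-1}$ (not to the original $G$), which is legitimate because the characterization gives the domination condition relative to $G_{i-1}$.
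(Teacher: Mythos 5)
Your proof is correct and follows exactly the route the paper intends: the paper derives Corollary~\ref{cor:cop} directly from the Aigner--Fromme/Quilliot characterization together with iterated applications of Lemma~\ref{lem:del}, precisely as you do. There is no meaningful difference between your argument and the paper's.
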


In particular, $\Rcal(n,r)$ is contractible when 
the corresponding random geometric graph (its 1-skeleton) is cop-win.
It is however known~\cite{RGGcops} that the random geometric graph in two dimensions is not cop-win 
a.a.s.~for all $r \leq c \ln n / \sqrt{n}$, for some constant $c>0$. 
(The result in~\cite{RGGcops} is stated only for the uniform distribution
on the unit square, but the proof easily adapts with very little modification to the current setting restricted to 
two dimensions.) 
Note that the stated bound is a multiplicative factor $\Omega(\sqrt{\ln n})$ larger than the 
bound $O( \sqrt{\ln n / n } )$ we have established in Theorem~\ref{thm:contract} for contractibility in two dimensions.
Our proofs did however make use of ideas from~\cite{RGGcops} that were used to show
the random geometric graph is a cop-win for $r = \Omega( \left(\ln n / n\right)^{1/5} )$ in two dimensions.
(An alternative proof was given by Alon and Pralat~\cite{AlonPralat}.)
Part of our proof was to show that the random geometric graph is ``locally cop-win'' when 
$r \geq \text{const} \cdot \left(\ln n/n\right)^{1/d}$.

\subsection{Suggestions for further work}

In the light of several results on random geometric graphs (e.g.~\cite{hamiltonrgg, PenroseMST, Penrosekconn}) and 
random \v{C}ech complexes (e.g.~\cite{BobrowksiOliveira}) in the regime when $r = \Theta\left( (\ln n/n)^{1/d} \right)$ 
it seems natural to expect a ``sharp threshold'' for contractibility.

\begin{conjecture}
If $K \subseteq \eR^d$ is a convex body with smooth boundary and the distribution is uniform on $K$ then 
there exists $c=c(K)$ such that, for every fixed $\eps>0$:

$$ \Pee( \Rcal(n,r_n) \text{ is contractible} ) \xrightarrow[n\to\infty]{} 
\begin{cases} 
0 & \text{ if $r_n \leq (c-\eps)\left(\frac{\ln n}{n}\right)^{1/d}$ }, \\
1 & \text{ if $r_n \geq (c+\eps)\left(\frac{\ln n}{n}\right)^{1/d}$. }
\end{cases}
$$
\end{conjecture}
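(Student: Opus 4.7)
The approach is to identify the critical constant $c=c(K)$ with the sharp threshold constant for connectivity of the random geometric graph $G(n,r_n)$. For the uniform measure on a convex body with smooth boundary, Penrose's work identifies this threshold as $c(K)(\ln n/n)^{1/d}$ for an explicit $c(K)>0$ determined by the boundary isolated-vertex calculation, and this is the natural candidate because $\Rcal(n,r_n)$ is contractible only when its $1$-skeleton is connected (a disconnected complex is never contractible). The $0$-side of the conjecture then follows immediately: if $r_n \leq (c-\eps)(\ln n/n)^{1/d}$, then a.a.s.\ $G(n,r_n)$ contains an isolated vertex, so $\norm{\Rcal(n,r_n)}$ is the disjoint union of at least two non-empty spaces, and is in particular not contractible.

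The $1$-side is the harder direction. The plan is to sharpen the proof of Theorem~\ref{thm:contract} so that its hypothesis becomes exactly the Penrose constant. Theorem~\ref{thm:contract} yields the constant $\delta_2/\delta_4$, which is typically much larger than $c(K)$ because both the covering lemma (Lemma~\ref{lem:cover}) and the motor lemma (Lemma~\ref{lem:motor}) are proved via very conservative packing arguments --- note the microscopic $\eps := \tfrac{1}{100} 2^{-9^d}$. One would reconstruct the cover $A_1,\dots,A_N$ so that at the sharp threshold two probabilistic statements hold simultaneously a.a.s.: (a) the random balls $B(X_i,r_n)$ cover $K$, and (b) for every index set $I \subseteq [N]$ arising in the proof and every pair $x,y \in \bigcap_{i\in I} A_i$ with $\norm{x-y}\geq r_n$, some $X_i$ lies inside $W(x,y,r_n)\cap \bigcap_{i\in I} A_i$. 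Given (a) and (b), the nerve-theorem plus dismantling argument of Theorem~\ref{thm:contract} goes through verbatim and yields contractibility.

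The main obstacle is that the dismantling argument genuinely requires points inside the sets $W(x,y,r_n)$, which are strictly smaller than the balls governing the isolated-vertex calculation; a priori the coverage threshold for the $W$-sets and the connectivity threshold of $G(n,r_n)$ can differ by a multiplicative constant, so the naive sharpening above might only recover the weaker coverage constant. A more promising route is a contrapositive/obstruction approach: show that if $G(n,r_n)$ is connected but $\Rcal(n,r_n)$ is not contractible, then $\norm{\Rcal(n,r_n)}$ carries a non-trivial $k$-dimensional homotopy class, and via a Mayer--Vietoris / nerve argument on a canonical cover of $K$ this forces an anomalously empty region of $K$. Bounding the probability of such an anomaly by a union bound over a finite (but $n$-dependent) family of ``bad configurations'' would close the gap, though this step appears to require a boundary-aware Poissonisation and coupling argument in the spirit of Penrose's original sharp threshold proof. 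As a fallback, one may hope to settle a slightly weaker version of the conjecture where $c(K)$ is the sharp ball-coverage constant rather than the connectivity constant --- still a sharp threshold result, and one to which the sharpened Lemmas~\ref{lem:cover} and~\ref{lem:motor} apply directly.
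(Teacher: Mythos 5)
This statement is a \emph{conjecture}, and the paper offers no proof of it --- so there is nothing in the paper to compare your proposal against. What I can assess is whether your proposal would close the conjecture, and it does not; you are candid about this yourself, but there is a more fundamental gap than the ones you flag.

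Your plan hinges on identifying $c(K)$ with the Penrose connectivity threshold for $G(n,r_n)$. This identification is not merely ``hard to prove''; there is good reason to suspect it is false. The paper's own motivating citations (e.g.\ Bobrowski--Oliveira on random \v{C}ech complexes) establish that, for random geometric complexes in this regime, the thresholds for vanishing of higher homology $H_k$ with $k\ge 1$ are strictly larger (by explicit constant factors) than the graph-connectivity threshold. Contractibility requires at least that \emph{all} reduced homology vanishes, so one should expect the contractibility threshold $c(K)$ to exceed the connectivity threshold $c_{\text{conn}}(K)$. If that is the case, then your $0$-side argument is incomplete even as a sketch: isolated vertices only certify non-contractibility for $r_n$ below $c_{\text{conn}}$, and you have no mechanism for showing non-contractibility in the window $c_{\text{conn}} < r_n/(\ln n/n)^{1/d} < c-\eps$, where $G(n,r_n)$ is already connected and the obstruction must be a non-trivial higher-dimensional homotopy or homology class. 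That obstruction argument is essentially as hard as the $1$-side and is missing entirely.

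As for the $1$-side: tightening Lemmas~\ref{lem:cover} and~\ref{lem:motor} to a sharp ball-coverage constant is plausible, and your observation that the $\eps = \tfrac{1}{100}2^{-9^d}$ slack and the $W(x,y,r)$ vs.\ $B(x,r)$ discrepancy are the bottlenecks is accurate. But the dismantling argument via Lemma~\ref{lem:del} requires hitting the sets $W(x,y,r)\cap\bigcap_{i\in I}A_i$, whose inradius is a fixed fraction $\delta_4$ or $\delta_5$ of $r$, and the union bound over index sets $I$ imposes its own constant; nothing in the proposal pins these down to the sharp Penrose constant, nor even to any explicit constant. Your fallback of proving a sharp threshold at the ball-coverage constant is a reasonable retreat, but note that the ball-coverage threshold is again known (in related settings) to differ from the connectivity threshold by a constant depending on the boundary behaviour, so it would be a genuinely different constant from the one you originally propose; you would be proving a weaker statement of the form ``contractibility threshold lies between $c_{\text{conn}}$ and $c_{\text{cover}}$,'' not the conjecture. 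In short, the proposal is a thoughtful discussion of strategy, not a proof, and the central difficulty --- determining the correct constant $c(K)$ and proving sharpness on both sides --- remains open.
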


In fact we would expect this to hold without the assumption that the boundary of $K$ is smooth and under more 
general assumptions on the probability measure, but we have opted for this version to increase the 
chances of success for whoever chooses to attempt the problem.

Having another look at the proof of Lemma~\ref{lem:del}, we see that it is in fact shown that 
$\Kcal(G\setminus v)$ is a strong deformation retract 
of $\Kcal(G)$ if the closed neighbourhood of $v$ is contained in the closed neighbourhood of some other vertex. 
What is more, one can check that the operation of removing $v$ can be described as a sequence of collapses. 
So in fact when $G$ is cop-win then $\Kcal(G)$ is {\em collapsible}. 
(The definitions of strong deformation retract and collapse can for instance be found in Section 6.4 of~\cite{Kozlovbook}.) 
As mentioned in the previous section, there is a choice of $r_n \gg \left(\frac{\ln n}{n}\right)^{1/d}$ 
such that the random geometric graph $G(n,r_n)$ a.a.s.~is not cop-win.
This leads us to the following conjecture.

\begin{conjecture}
Suppose $K \subseteq \eR^d$ is a convex body with smooth boundary and the distribution is uniform on $K$.
There exists a sequence $\rho_n \gg \left(\frac{\ln n}{n}\right)^{1/d}$ such that 
$\Rcal(n,r_n)$ is a.a.s.~not collapsible whenever $r_n \leq \rho_n$.
\end{conjecture}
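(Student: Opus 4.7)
The plan is to combine the non-cop-win result for random geometric graphs of~\cite{RGGcops} with a local geometric obstruction to collapsibility. Choose $\rho_n$ to lie just inside the non-cop-win window of~\cite{RGGcops} and strictly above the contractibility threshold of Theorem~\ref{thm:contract}, so that a.a.s.~$G(n,r_n)$ is not cop-win while $\Rcal(n,r_n)$ is still contractible. The Aigner--Fromme/Quilliot characterisation then rules out the specific reduction sequence used in Lemma~\ref{lem:del}, but this is only a special kind of elementary collapse (a so-called strong collapse in the sense of Barmak--Minian); the real task is to rule out \emph{every} elementary collapse of $\Rcal(n,r_n)$, since for flag complexes collapsibility is strictly weaker than dismantlability of the $1$-skeleton.

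To handle general elementary collapses I would exhibit an a.a.s.~local \emph{witness} to non-collapsibility. Identify a constant-sized configuration of points $C \subseteq K$ whose induced flag complex is known a priori to be non-collapsible---natural candidates include a geometric realisation of the dunce hat or Bing's house with two rooms, or a smaller custom-built gadget with the same property---and show by a first- and second-moment argument that, with probability tending to one, $\Rcal(n,r_n)$ contains a copy of this gadget as an induced subcomplex surrounded by a prescribed ``protective shell'' of additional random points. The shell must be designed so that every face of the witness is contained in so many maximal simplices of $\Rcal(n,r_n)$ that it can never become a free face, regardless of what has been collapsed away elsewhere. The existence of such a witness then forces every discrete Morse function on $\Rcal(n,r_n)$ to carry multiple critical cells located at~$C$, contradicting collapsibility.

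The main obstacle is making the protective-shell argument quantitatively rigorous. Elementary collapses in $\Rcal(n,r_n)$ need not preserve the flag property, so a long cascade of collapses could in principle expose simplices of $C$ as free faces by first dismantling their surroundings, and one must rule this out uniformly over all collapse sequences. One promising route is a link-by-link analysis at scale $r_n$: show that the link of every face of the witness inside $\Rcal(n,r_n)$ remains ``too connected'' throughout any collapse sequence to admit the required free-face structure. Balancing the density needed to protect $C$ against the upper bound on $r_n$ required to stay inside the non-cop-win window of~\cite{RGGcops} is the central quantitative difficulty, and is the reason $\rho_n$ in the conjecture can only be expected to exceed $(\ln n/n)^{1/d}$ by a modest (polylogarithmic or slightly polynomial) factor.
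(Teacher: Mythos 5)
This is stated in the paper as a \emph{conjecture}, not a theorem: the authors explicitly offer it as an open problem, so there is no ``paper's own proof'' to compare against. What you have written is a research plan, and you are commendably upfront that the central step is missing, but it is worth spelling out exactly where the plan is incomplete and where it would likely break down as stated.

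The first paragraph is fine as motivation, and you correctly identify the key point the authors themselves flag in their discussion: the Aigner--Fromme/Quilliot dismantling order of Lemma~\ref{lem:del} is only a particular kind of collapse (a strong collapse in the Barmak--Minian sense), so the non-cop-win result of~\cite{RGGcops} rules out that specific reduction but says nothing about collapsibility in general. Choosing $\rho_n$ inside the non-cop-win window is therefore \emph{necessary} for the conjecture's conclusion to have a chance (if $G(n,r_n)$ were cop-win the complex would be collapsible), but it contributes nothing towards proving non-collapsibility, and from this point onward the argument must be built from scratch.

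The genuine gap is in the ``local witness'' mechanism. The existence of a constant-sized induced subcomplex $C$ whose own flag complex is non-collapsible simply does not constrain the collapsibility of the ambient complex: a cone over any complex, collapsible or not, is collapsible, and in $\Rcal(n,r_n)$ the configuration $C$ will typically have many vertices at distance less than $r_n$ from all of $C$, which cone it off inside the ambient complex. Your ``protective shell'' is meant to prevent this, but as described it does the opposite: adding more random points near $C$ creates \emph{more} simplices, not fewer, and in particular tends to create vertices adjacent to all of $C$. More fundamentally, collapsibility is a global property of the whole complex: a collapse sequence may delete faces far from $C$ in a way that eventually exposes a free face of $C$, and ruling this out uniformly over all sequences is precisely as hard as the original question (deciding collapsibility of a given simplicial complex is NP-complete, so one should expect no efficiently checkable local certificate for non-collapsibility). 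The claim that ``every discrete Morse function must carry multiple critical cells located at $C$'' is exactly the statement one would need to prove, not a consequence of the construction. Until a concrete mechanism is given that forces every discrete Morse function to have at least two critical cells, or an explicit topological invariant obstructing collapsibility (such as non-vanishing of some suitable combinatorial invariant that is robust to the ambient embedding), this remains a sketch of the difficulty rather than a proof, which is consistent with the authors leaving it as a conjecture.
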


Another natural direction for further research would be to consider a setup where the random points are not all contained in $K$, 
but there is a small amount of random noise added to the point locations. 
That is, we initially choose each point randomly from a compact, smooth $d$-manifold with 
boundary but then we add a small i.i.d.~``error'' to it. It would be interesting to determine under which conditions 
on the noise our Theorems~\ref{thm:contract} and~\ref{thm:smooth} are still valid.
Previous work in this spirit includes~\cite{Niyogi2011} and~\cite{BobrowskiMukherjeeTaylor}.
Of course when the noise comes from a fixed distribution with unbouded support (e.g.~a multivariate standard normal) then there
is no chance of direct analogues of Theorems~\ref{thm:contract} and~\ref{thm:smooth} being valid due to ``outliers'' 
(see~\cite{Adlercrackle}). 
Instead, it makes sense to consider the fairly natural case where the co-variance matrix of the noise decays as a function of $n$, 
and characterize the behaviour in terms of the rate of decay, perhaps obtaining a kind of threshold result.

\subsection*{Acknowledgement} 

We thank Gert Vegter for helpful discussions. We thank the anonymous referees for comments that have improved our paper.

%

\bibliographystyle{hplain}
\bibliography{ReferencesMullerStehlik2}

\end{document}